\newtheorem{thm}{Theorem}[section]
\newtheorem{lemma}{Lemma}[section]
\newtheorem{cor}{Corollary}[section]
\newtheorem{defin}{Definition}
\newtheorem{ex}{Example}
\newtheorem{rem}{Remark}[section]
\numberwithin{equation}{section}
\begin{document}
\title{The Luk\'{a}cs--Olkin--Rubin theorem on symmetric cones}

\author{Eszter Gselmann}

\dedicatory{Dedicated to Professor K\'{a}roly Lajk\'{o} on his seventieth birthday. }

\subjclass[2010]{Primary 15A48; Secondary 39B42, 62E10.}

\keywords{functional equation, Luk\'{a}cs theorem, symmetric cone, Euclidean Jordan algebra, 
Wishart distribution}

\thanks{This research has been supported by the Hungarian Scientific Research Fund (OTKA)
Grant NK 814 02 and by the 'Lend\"{u}let' Program (LP2012-46/2012) of the Hungarian Academy of Sciences.}

\maketitle

\begin{abstract}
In this paper we prove a Luk\'{a}cs type characterization theorem 
of the Wishart distribution on 
Euclidean simple Jordan algebras under weak and natural regularity assumptions on 
the densities. 
\end{abstract}

\section{Introduction and preliminaries}

The main purpose of this paper is to prove a generalization of the most celebrated result 
in the field of characterization problems concerning probability distribution, the theorem 
of Luk\'{a}cs. This is contained in the following, see also Luk\'{a}cs \cite{Luk55}. 

\begin{thm}[Luk\'{a}cs]
If $X$ and $Y$ are non-degenerate, independent random variables, then the random variables
\[
    V=X+Y \quad \text{ and }\quad U = \frac{X}{X+Y} 
\]
are independently distributed 
if and only if both $X$ and $Y$ have gamma distributions with the same scale parameter.
\end{thm}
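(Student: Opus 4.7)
For the ``if'' direction, assuming $X \sim \Gamma(\alpha,\theta)$ and $Y \sim \Gamma(\beta,\theta)$ are independent, I would perform the change of variables $x=uv$, $y=(1-u)v$ with Jacobian $v$ and verify by direct computation that the joint density of $(U,V)$ factors as the product of a $\mathrm{Beta}(\alpha,\beta)$ density in $u$ and a $\Gamma(\alpha+\beta,\theta)$ density in $v$, which immediately gives independence.

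For the converse, the plan is to assume that $X$ and $Y$ admit sufficiently smooth positive densities $f$ and $g$ on $(0,\infty)$. The joint density of $(U,V)$ is then $v\,f(uv)\,g((1-u)v)$, and independence of $U$ and $V$ forces it to factor as $h(u)k(v)$. Substituting $s=uv$, $t=(1-u)v$ and taking logarithms turns this into the functional equation
\[
\log f(s) + \log g(t) + \log(s+t) = H\!\left(\tfrac{s}{s+t}\right) + K(s+t), \qquad s,t>0,
\]
for unknown smooth functions $H$, $K$. The key tactical step will be to extract from this an ODE for $f$ alone: differentiating in $s$ and in $t$ separately and subtracting cancels $K$, yielding $(s+t)\bigl(\phi'(s)-\psi'(t)\bigr) = H'\!\left(s/(s+t)\right)$, where $\phi=\log f$ and $\psi=\log g$. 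Since the right-hand side depends only on $u=s/(s+t)$, fixing $u$ and differentiating in $v=s+t$ should force
\[
\phi'(s) + s\phi''(s) = \psi'(t) + t\psi''(t) = c
\]
for a single constant $c$, as $s$ and $t$ range independently over $(0,\infty)$.

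Integrating the ODE $(s\phi'(s))' = c$ yields $\phi(s) = cs + d\log s + e$, so $f(s) = C s^d e^{cs}$; integrability on $(0,\infty)$ forces $c<0$ and $d>-1$, and the reparametrization $c=-1/\theta$, $d=\alpha-1$ identifies $f$ as the $\Gamma(\alpha,\theta)$ density. The symmetric analysis for $g$ delivers a gamma density for $Y$, and because the \emph{same} constant $c$ appears in both ODEs the scale parameter $\theta$ must coincide. The main obstacle will be the regularity needed to justify the differentiations above: without smoothness on $f,g$ the pointwise calculation breaks down, and the classical proof without density assumptions instead passes to Laplace or Mellin transforms and invokes analytic continuation to reach the same ODE. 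The density-based route sketched here is, however, the one that should adapt cleanly to the Wishart setting on symmetric cones studied in the remainder of the paper.
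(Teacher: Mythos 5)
First, a point of reference: the paper does not prove this statement at all; it is quoted from Luk\'{a}cs \cite{Luk55} as motivation for the symmetric-cone generalization, so there is no internal proof to compare against. Measured against the statement itself, your ``if'' direction is standard and correct, and your converse computation is internally consistent: the factorization $v\,f(uv)\,g((1-u)v)=h(u)k(v)$ does lead to $(s+t)\left(\phi'(s)-\psi'(t)\right)=H'\left(s/(s+t)\right)$ and then to $\left(s\phi'(s)\right)'=c$ with a common constant $c$, which correctly forces gamma densities with a common scale. The genuine gap is that this proves a strictly weaker theorem than the one stated. The statement assumes only that $X$ and $Y$ are non-degenerate and independent --- no densities, let alone strictly positive, twice-differentiable ones --- whereas every step of your converse after the change of variables requires $\log f$ and $\log g$ to be $C^{2}$ on $]0,+\infty[$, and even granting densities the factorization holds only for almost every $(u,v)$, so pointwise differentiation is not available without a prior regularization argument. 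You name this obstacle yourself and defer to ``Laplace or Mellin transforms and analytic continuation,'' but that deferred part \emph{is} the proof of the stated theorem (Luk\'{a}cs works with conditional moments such as $E\left[U^{j}e^{-tV}\right]$ for $j=1,2$ and derives a differential equation for the Laplace transforms, which are automatically analytic, so no regularity hypothesis on the laws is needed); as written, the proposal does not close it.

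It is also worth noting that the differentiation route you chose is precisely the one this paper is designed to circumvent. The introduction recalls that Bobecka--Weso{\l}owski needed strictly positive, twice-differentiable densities for exactly this reason, and the whole machinery of Section 2 (reduction to Pexider-type equations, the cocycle identity in Lemma \ref{L2.2}, and J\'{a}rai's theorem to upgrade measurable almost-everywhere solutions to continuous everywhere solutions) exists so that the Olkin--Baker equation $a(x)+b(y)=c(x+y)+d\left((x+y)^{-\frac{1}{2}}x(x+y)^{-\frac{1}{2}}\right)$ can be solved with no differentiation at all; in the scalar case the remark after Lemma \ref{L2.2} records the general solution on $]0,+\infty[$ with no regularity assumption whatsoever. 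So your sketch, where it works, buys a short classical argument under strong hypotheses, while the paper's approach buys the same conclusion from mere existence and measurability of densities --- and, contrary to your closing remark, it is the functional-equation route rather than the ODE route that actually ``adapts cleanly'' to the symmetric-cone setting treated here.
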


Since its 1955 appearance this theorem has been 
extended and generalized in several ways. 
For example, in Olkin--Rubin \cite{OlkRub62} and also in Casalis--Letac \cite{CasLet96} the authors
extended the above result to matrix and symmetric cone variable distributions, respectively. 
The main problem here is that in such  situations there is no unique way of defining the quotient. 
Therefore, the authors quoted above initiated the notion of a so-called division algorithm and under some strong assumption 
they proved an analogue of Luk\'{a}cs's result. To avoid this assumption, in Bobecka--Weso{\l}owski 
\cite{BobWes02} it was assumed that the densities are strictly positive and they are twice 
differentiable. This regularity assumption was weakened in Ko{\l}odziejek \cite{Kol12}. In that paper 
only continuity was assumed.

Therefore the main aim of this paper is to give a unified proof of analogue of Luk\'{a}cs's theorem 
in the symmetric cone setting, assuming only the existence of densities. 

\subsection{Euclidean Jordan algebras}

In this subsection we collected some definitions and statements from the theory of 
Euclidean Jordan algebras that will be used subsequently. For further 
results we refer to the monograph of Faraut and Kor\'{a}nyi, see \cite[Section III. 1.]{FarKor94}.

\begin{defin}
 We say that $(\mathbb{E}; \langle \cdot, \cdot \rangle; \cdot)$ is a \emph{Euclidean Jordan algebra}, if 
\begin{enumerate}[(i)]
 \item $(\mathbb{E}; \langle \cdot, \cdot \rangle)$ is an inner product space 
\item there exists $e\in\mathbb{E}$ such that 
\[
 xe=ex=x
\]
is fulfilled for any $x\in V$
\item for all $x, y, z\in\mathbb{E}$ 
\begin{enumerate}[(a)]
 \item $xy=yx$
\item $x(x^{2}y)=x^{2}(xy)$
\item $\langle x, yz\rangle =\langle xy, z\rangle$
\end{enumerate}
holds. 
\end{enumerate}
\end{defin}

\begin{defin}
 Let $(\mathbb{E}; \langle \cdot, \cdot \rangle; \cdot)$ be a Euclidean Jordan algebra, fix an $x\in\mathbb{E}$ and let us consider the maps 
$\mathbb{P}(x)$ and $\mathbb{L}(x)$ defined by 
\[
 \mathbb{L}(x)y=xy
\]
and 
\[
 \mathbb{P}(x)=2\mathbb{L}^{2}(x)-\mathbb{L}(x^{2}). 
\]
Then the mapping 
\[
 \mathbb{E}\ni x \longmapsto \mathbb{P}(x)\in\mathrm{End}(\mathbb{E})
\]
is called the \emph{quadratic representation} of $\mathbb{E}$. 

An element $z\in\mathbb{E}$ is said to be \emph{invertible} if there exists 
$w\in\mathbb{E}$ such that 
\[
 \mathbb{L}(z)w=e. 
\]
Then $w$ is called the \emph{inverse of $z$} and it will be denoted by $z^{-1}$. 
\end{defin}

\begin{defin}
 The Euclidean Jordan algebra $(\mathbb{E}; \langle \cdot, \cdot \rangle; \cdot)$ is 
said to be \emph{simple} if it is not a Cartesian product of two nontrivial Euclidean Jordan algebras. 
\end{defin}

\begin{ex}
Let $\mathbb{K}=\left\{\mathbb{R}, \mathbb{C}, \mathbb{H}, \mathbb{O}\right\}$ and 
let denote $\mathbf{S}_{r}(\mathbb{K})$ the set of all $r\times r$ Hermitian matrices with entries in 
$\mathbb{K}$. 
If $x, y\in\mathbf{S}_{r}(\mathbb{K})$ let 
\[
 \langle x, y\rangle =\mathrm{Tr}(x\overline{y})
\]
and 
\[
 \mathbb{L}(x)y=\dfrac{xy+yx}{2}. 
\]
Then $\left(\mathbf{S}_{r}(\mathbb{K}); \langle \cdot, \cdot \rangle; \cdot\right)$ is a 
Euclidean Jordan algebra. 
Furthermore, in this case
\[
 \mathbb{P}(y)x=yxy 
\qquad 
\left(x, y\in \mathbb{E}\right). 
\]
\end{ex}

\begin{ex}
Let $n\geq 2$ and let us consider $\mathbb{R}^{n+1}$ with the 
usual inner product and with the Jordan product 
\[
 (x_{0}, x_{1}, \ldots, x_{n}) \cdot 
(y_{0}, y_{1}, \ldots, y_{n})=
\left(\sum_{i=0}^{n}x_{i}y_{i}, x_{0}y_{1}+x_{1}y_{0}, \ldots, x_{0}y_{n}+x_{n}y_{0}\right). 
\]
Then $\mathbb{R}^{n+1}$ is a Euclidean Jordan algebra. 
\end{ex}

The following result can also be found in Faraut--Kor\'{a}nyi, see \cite[Theorem V.3.7]{FarKor94}. 

\begin{thm}
 Up to linear isomorphism there exist only the following Euclidean simple Jordan algebras
\begin{enumerate}[(i)]
 \item $\mathbf{S}_{r}(\mathbb{R})$, where $r\geq 1$ is arbitrary;
\item $\mathbf{S}_{r}(\mathbb{C})$, where $r\geq 2$ is arbitrary;
\item $\mathbf{S}_{r}(\mathbb{H})$, where $r\geq 2$ is arbitrary;
\item $\mathbf{S}_{3}(\mathbb{O})$;
\item $\mathbb{R}^{n+1}$ appearing in the previous example. 
\end{enumerate}
\end{thm}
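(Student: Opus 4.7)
The plan is to follow the standard structural route via the Peirce decomposition. First, using the spectral theorem for Euclidean Jordan algebras, I fix a Jordan frame $c_{1},\dots,c_{r}$ of pairwise orthogonal primitive idempotents with $c_{1}+\cdots+c_{r}=e$; here $r$ is the rank of $\mathbb{E}$. Since the operators $\mathbb{L}(c_{i})$ are commuting and symmetric with eigenvalues in $\{0,1/2,1\}$, one obtains the orthogonal Peirce decomposition $\mathbb{E}=\bigoplus_{i\le j}V_{ij}$, where $V_{ii}=\mathbb{R}c_{i}$ and $V_{ij}$ (for $i<j$) is the common $1/2$-eigenspace of $\mathbb{L}(c_{i})$ and $\mathbb{L}(c_{j})$. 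The multiplication rules $V_{ij}V_{jk}\subseteq V_{ik}$, $V_{ij}V_{kl}=\{0\}$ whenever $\{i,j\}\cap\{k,l\}=\emptyset$, and $V_{ij}^{2}\subseteq V_{ii}+V_{jj}$ follow from the Jordan identity. Simplicity of $\mathbb{E}$ then forces $\dim V_{ij}$ to be a common integer $d$ for all pairs $i<j$, because the off-diagonal Peirce subspaces are permuted transitively by Jordan automorphisms of the form $\exp(\mathbb{L}(x))$ with $x\in V_{ik}$.

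Next, I would split into cases by the rank. For $r=1$, $\mathbb{E}=\mathbb{R}e\cong\mathbf{S}_{1}(\mathbb{R})$, giving case (i). For $r\ge 2$, the key is to analyse $V_{12}$ on its own: I endow it with a bilinear product $\star$ and a quadratic norm derived from the Jordan multiplication and the inner product, using that $2(xy)\in V_{11}+V_{22}$ has a canonical scalar $c_{1}$-component, and setting $\|x\|^{2}=\langle x,x\rangle$. Provided $r\ge 3$, comparing products around the triangle $c_{1},c_{2},c_{3}$ of primitive idempotents shows that $(V_{12},\star,\|\cdot\|)$ is a real unital composition algebra; Hurwitz's theorem then forces $V_{12}\cong\mathbb{K}\in\{\mathbb{R},\mathbb{C},\mathbb{H},\mathbb{O}\}$ and so $d\in\{1,2,4,8\}$.

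Finally, I would reconstruct $\mathbb{E}$ globally from its Peirce data. Once $V_{12}$ has been identified with a composition algebra $\mathbb{K}$, the symmetric structure of the frame transports this identification to every $V_{ij}$, producing a linear isomorphism $\mathbb{E}\to\mathbf{S}_{r}(\mathbb{K})$ that intertwines the Jordan products. For $r\ge 4$, the presence of four primitive idempotents yields an associativity relation on $\mathbb{K}$ that excludes the non-associative $\mathbb{O}$, leaving cases (i)--(iii); when $r=3$, all four choices of $\mathbb{K}$ survive, giving (i)--(iv). The rank $r=2$ case is different because no triangle of idempotents is available: here no multiplicative structure is forced on $V_{12}$, which remains an arbitrary Euclidean space $\mathbb{R}^{n}$, and $\mathbb{E}$ is exactly the spin factor $\mathbb{R}^{n+1}$ of case (v).

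The principal obstacle is the step that manufactures a Hurwitz algebra out of $V_{12}$ for $r\ge 3$: one must verify the composition identity $\|x\star y\|=\|x\|\cdot\|y\|$ together with unitality, and this requires a delicate use of the Jordan axioms combined with the identities satisfied by the quadratic representation $\mathbb{P}(x)=2\mathbb{L}(x)^{2}-\mathbb{L}(x^{2})$. Once Hurwitz's classification is invoked, the reconstruction of the global isomorphism with $\mathbf{S}_{r}(\mathbb{K})$ amounts to bookkeeping across the Peirce components.
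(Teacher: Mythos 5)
The paper does not prove this theorem at all: it is quoted verbatim from Faraut--Kor\'{a}nyi (Theorem V.3.7 of \cite{FarKor94}) as background material, so there is no in-paper proof to compare against. Your outline is precisely the classification argument of that standard reference: Peirce decomposition relative to a Jordan frame, constancy of $d=\dim V_{ij}$ forced by simplicity, identification of $V_{12}$ with a unital composition algebra when $r\geq 3$, Hurwitz's theorem to pin down $\mathbb{K}\in\{\mathbb{R},\mathbb{C},\mathbb{H},\mathbb{O}\}$, an associativity obstruction excluding $\mathbb{O}$ for $r\geq 4$, and the spin factors in rank $2$. So the approach is the right one and is the same as the cited source's.

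That said, what you have written is a roadmap rather than a proof. The two steps you yourself flag as the ``principal obstacle'' are exactly where all the work lies: (1) defining the product $\star$ and norm on $V_{12}$ and verifying unitality together with the composition identity $\|x\star y\|=\|x\|\cdot\|y\|$ from the Jordan axioms and the Peirce multiplication rules, and (2) the coordinatization step that assembles the local identifications of the $V_{ij}$ into a global Jordan-algebra isomorphism $\mathbb{E}\to\mathbf{S}_{r}(\mathbb{K})$ (this is not mere ``bookkeeping''; it is where the associativity constraint for $r\geq 4$ actually gets proved). Neither is carried out, so as it stands the argument is incomplete, even though every assertion in it is true and the skeleton matches the literature. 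A minor remark on the statement itself: the five families are not disjoint (e.g.\ $\mathbf{S}_{2}(\mathbb{R})$, $\mathbf{S}_{2}(\mathbb{C})$, $\mathbf{S}_{2}(\mathbb{H})$ are spin factors of dimensions $3$, $4$, $6$), which is consistent with your rank-$2$ case but worth noting so that the case split does not look like it produces a contradiction.
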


\begin{thm}[{\cite[Section III.2]{FarKor94}}]
 Let $\mathbb{E}$ be a Euclidean Jordan algebra. Then the set 
\[
 \mathscr{V}=\left\{x^{2}\, \vert \, x \text{ is invertible}\right\}
\]
is and open convex cone that has the following properties. 
\begin{enumerate}[(i)]
 \item The closure of $\mathscr{V}$ is the set 
\[
 \overline{\mathscr{V}}=\left\{x^{2}\, \vert \, x\in\mathbb{E}\right\}. 
\]
\item If $y\in\mathbb{E}$ and for all $x\in\overline{\mathscr{V}}\setminus \left\{0 \right\}$
\[
 \langle y, x\rangle >0
\]
holds, then $y\in \mathscr{V}$. 
\item Let $y\in\mathscr{V}$, then for all $x\in\overline{\mathscr{V}}\setminus \left\{0 \right\}$ 
we have 
\[
\langle y, x\rangle >0.  
\]
\item If $x\in\mathbb{E}$ is invertible, then 
\[
 \mathbb{P}(x)(\mathscr{V})=\mathscr{V}. 
\]
\item For all $x\in\mathscr{V}$ $\mathbb{P}(x)>0$. 
\item The cone $\mathscr{V}$ is the connected component of the identity in the 
set of invertible elements. The cone $\mathscr{V}$ is the set of all elements 
$x\in\mathbb{E}$ for which $\mathbb{L}(x)$ is positive definite. 
\end{enumerate}
\end{thm}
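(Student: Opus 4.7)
The plan is to base every part on the spectral theorem for Euclidean Jordan algebras: every $x\in\mathbb{E}$ decomposes as $x=\sum_{i=1}^{r}\lambda_i c_i$ for a Jordan frame $(c_i)$ of orthogonal primitive idempotents with real eigenvalues $\lambda_i$. This identifies $\mathscr{V}$ with the elements of strictly positive spectrum and $\overline{\mathscr{V}}$ with those of nonnegative spectrum. Openness of $\mathscr{V}$ is then stability of the condition $\min_i \lambda_i>0$ under small perturbations, and convexity follows because whenever $x,y$ have positive spectrum, $\mathbb{L}(tx+(1-t)y)$ is a convex combination of positive definite operators.

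For (i), given $x=\sum\lambda_i c_i$, the perturbed element $x_\varepsilon=\sum(\lambda_i+\varepsilon)c_i$ is invertible for all small $\varepsilon>0$, satisfies $x_\varepsilon^2\in\mathscr{V}$, and converges to $x^2$; conversely any limit of squares of invertibles has nonnegative spectrum and is therefore itself a square by spectral calculus. For (iii), writing $y=\sum\lambda_i c_i$ with $\lambda_i>0$ and $x=\sum\mu_j d_j$ with $\mu_j\geq 0$ in possibly different frames gives $\langle y,x\rangle=\sum_{i,j}\lambda_i\mu_j\langle c_i,d_j\rangle$; a short trace-form computation yields $\langle c_i,d_j\rangle\geq 0$, and $\sum_{i,j}\langle c_i,d_j\rangle=\langle e,e\rangle$ prevents the quantities from all vanishing when $x\neq 0$. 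Statement (v) follows because axiom (iii)(c) makes $\mathbb{P}(x)$ self-adjoint, while a computation with the Peirce decomposition adapted to the frame of $x$ shows that $\mathbb{P}(x)$ acts on $\mathbb{E}_{ij}$ as multiplication by $\lambda_i\lambda_j>0$.

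For (iv), I would invoke the fundamental formula $\mathbb{P}(\mathbb{P}(x)y)=\mathbb{P}(x)\mathbb{P}(y)\mathbb{P}(x)$ together with $\mathbb{P}(x^{-1})=\mathbb{P}(x)^{-1}$ for invertible $x$: the first shows that $\mathbb{P}(x)$ sends squares to squares, the second that it preserves invertibility, so $\mathbb{P}(x)(\mathscr{V})\subseteq\mathscr{V}$, and symmetry between $x$ and $x^{-1}$ gives equality. Part (ii) amounts to the self-duality of the symmetric cone: the hypothesis places $y$ in the interior of the dual of $\overline{\mathscr{V}}$, and this interior coincides with $\mathscr{V}$ itself by a spectral argument in which one diagonalises $y$ and tests against the elements $\mathbb{P}(y^{1/2})$ produced from (iv).

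Finally, for (vi), $\mathscr{V}$ is open, convex and contains $e$, hence sits in the connected component of $e$ in the set of invertible elements; conversely $\mathscr{V}$ is also closed within that set, since a limit of elements of strictly positive spectrum that remains invertible cannot have an eigenvalue that has crossed zero. The characterization via $\mathbb{L}(x)>0$ is read off the same Peirce decomposition, where $\mathbb{L}(x)$ has eigenvalues $\tfrac12(\lambda_i+\lambda_j)$. The main obstacle, throughout, is that essentially everything rests on two deep pieces of machinery — the spectral theorem for Euclidean Jordan algebras and the fundamental formula — which themselves require the full development of Jordan frames and Peirce decompositions of Chapters II--III of Faraut--Kor\'{a}nyi; conditional on those, each of the six assertions reduces to routine bookkeeping with spectral data.
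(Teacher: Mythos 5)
This theorem is not proved in the paper at all: it is quoted verbatim as background from Faraut--Kor\'{a}nyi \cite[Section III.2]{FarKor94}, so there is no in-paper argument to measure you against. What you have written is, in substance, a compressed roadmap of the reference's own development (spectral theorem, Jordan frames, Peirce decomposition, fundamental formula), and you are candid that the proposal is conditional on exactly that machinery. As a self-contained proof it therefore does not stand on its own, but as an outline of how the six assertions follow from Chapters II--III of \cite{FarKor94} it is essentially the standard route, and most of the reductions you describe are correct.

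A few steps are imprecise as written. In (i), if some $\lambda_{i}<0$ then $x_{\varepsilon}=\sum_{i}(\lambda_{i}+\varepsilon)c_{i}$ need not be invertible for \emph{all} small $\varepsilon>0$; it is cleaner to perturb the square directly, approximating $x^{2}=\sum_{i}\lambda_{i}^{2}c_{i}$ by $\sum_{i}(\lambda_{i}^{2}+\varepsilon)c_{i}=\bigl(\sum_{i}\sqrt{\lambda_{i}^{2}+\varepsilon}\,c_{i}\bigr)^{2}\in\mathscr{V}$. In (iii), the aggregate identity $\sum_{i,j}\langle c_{i},d_{j}\rangle=\langle e,e\rangle$ does not by itself force the relevant terms to be positive; what you need is $\sum_{i}\langle c_{i},d_{j_{0}}\rangle=\langle e,d_{j_{0}}\rangle=\langle d_{j_{0}},d_{j_{0}}\rangle>0$ for the particular index $j_{0}$ with $\mu_{j_{0}}>0$, combined with $\langle c_{i},d_{j}\rangle=\langle \mathbb{L}(c_{i})d_{j},d_{j}\rangle\geq 0$. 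In (iv), applying the fundamental formula to $e$ yields $(\mathbb{P}(x)y)^{2}=\mathbb{P}(x)\mathbb{P}(y)x^{2}$, which is not yet the assertion that $\mathbb{P}(x)$ maps squares to squares; the clean argument (and the one in \cite{FarKor94}) first establishes (vi) and then observes that $\mathbb{P}(x)$ maps the connected set $\mathscr{V}$ into the invertible elements while $\mathbb{P}(x)e=x^{2}\in\mathscr{V}$, so the image stays in the component of $e$. Relatedly, your convexity argument via $\mathbb{L}(tx+(1-t)y)=t\mathbb{L}(x)+(1-t)\mathbb{L}(y)$ presupposes the characterization in (vi), so the order of the claims must be rearranged to avoid circularity. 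None of these is a fatal error, but each would need to be repaired before the sketch could be called a proof rather than a pointer to one.
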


Since each simple Jordan algebra corresponds to a symmetric cone, up 
to a linear isomorphism there exist  also only five types of symmetric cones. 
We remark that the cone corresponding to the Euclidean Jordan algebra 
$\mathbb{R}^{n+1}$ is called the \emph{Lorentz cone}. 
This case was investigated in \cite{Kol10}. Therefore, in what follows we will 
implicitly assume that the symmetric cone $\mathscr{V}$ is not the Lorentz cone.

As we wrote in at the beginning of this paper, our main aim is to present a characterization theorem for the 
Wishart distribution. Therefore, we also have to review some notations concerning it. For further details, we refer to 
Eaton \cite[Chapter VIII., 302--333]{Eat07}. 

 Let $a\in \mathscr{V}$ and
\[
 p\in \left\{0, \frac{d}{2}, \ldots, \frac{d(r-1)}{2}\right\}\cup ]d(r-1), +\infty[. 
\]
We say that the random variable $X$ has \emph{Wishart distribution} with scalar parameter $a$ 
and shape parameter $p$, if the Laplace transform of its distribution is 
\[
 \int_{\overline{\mathscr{V}}}\exp\left(-\langle t, y\rangle\right)d(\gamma_{p, a}y)=
\dfrac{1}{\left(\det\left(\mathbf{1}+ta^{-1}\right)\right)^{p}}, 
\]
where $\Gamma_{\mathscr{V}}$ denotes the multivariate Gamma function. 

If $p\leq \dfrac{\dim(\mathscr{V})}{r}-1$ then the Wishart no longer has a density, but it represents a singular distribution. 
If $p > \dfrac{\dim(\mathscr{V})}{r}-1$ then the Wishart distribution is absolutely continuous with density function
\[
 \gamma_{p, a}(dy)=\dfrac{\det(a)^{p}}{\Gamma_{\mathscr{V}}(p)}\left(\det(y)\right)^{p-\frac{\mathscr{V}}{r}}
\cdot \exp\left(-\langle a, y\rangle\right)I_{\mathscr{V}}(y)dy. 
\]

\subsection{Prerequisites from the theory of functional equations}

This subsection contains some basic definitions and results from the theory of functional 
equations. 
Concerning this topic we refer to the two basic monographs 
Acz\'{e}l \cite{Acz66} and Kuczma \cite{Kuc09}. 

Subsequently we will work on the cone $\mathscr{V}$. Therefore, when reviewing some definition 
(e.g. the notion of additive and logarithmic mappings, resp.) we will always restrict 
our considerations only to a Euclidean Jordan algebra $\mathbb{E}$. 

\begin{defin}
Let $A\subset \mathbb{E}$ be an arbitrary nonempty set and
\[
\mathscr{A}=\left\{(x, y)\in\mathbb{E}^{2}\, \vert\,  x, y, x+y\in A\right\}.
\]
A function $a\colon A\to\mathbb{R}$ is called \emph{additive on $A$}, if for all $(x, y)\in\mathscr{A}$
\begin{equation}\label{Eq1.3.1}
a(x+y)=a(x)+a(y).
\end{equation}
If $A=\mathbb{E}$, then the function $a$ will be called simply \emph{additive}.
\end{defin}

It is well-known that the solutions of the equation above, under some
mild regularity condition, are of the form
\[
 a(x)=\langle\lambda, x\rangle \qquad \left(x\in A\right),
\]
with a certain constant $\lambda\in\mathbb{E}$. 
It is also known, however, that there are additive functions that are nowhere continuous, 
see Kuczma \cite{Kuc09}.

In the sequel we will use the following extension theorem concerning the 
so-called Pexider equation, this result is a special case of 
\cite[Theorem 3]{ChuTab08} if we choose the normed space 
$X$ to be $\mathbb{E}$ and the open and connected set 
$D$ to be $\mathscr{V}\times \mathscr{V}$. 

\begin{thm}
Assume that for the functions $k, l, n\colon \mathscr{V}\to \mathbb{R}$  
\[
 k(x+y)=l(x)+n(y)
\]
is fulfilled for any $x, y\in \mathscr{V}$. 
If the function $k$ is nonconstant, then these functions can be 
\emph{uniquely} extended to functions 
$\widetilde{k}, \widetilde{l}, \widetilde{n}\colon \mathbb{E}\to \mathbb{R}$ so that 
\[
 \widetilde{k}(x+y)=\widetilde{l}(x)+\widetilde{n}(y)  
\qquad 
\left(x, y\in \mathbb{E}\right). 
\]
\end{thm}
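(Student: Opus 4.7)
The plan is to reduce the Pexider-type equation to an ordinary Cauchy equation for an auxiliary function $a$ defined on all of $\mathbb{E}$, and then to read off the extensions from $a$.

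The first step is to show that the difference $l(x+h) - l(x)$ depends only on $h$ whenever $x, x+h \in \mathscr{V}$. Fix $x, x' \in \mathscr{V}$ with $x+h, x'+h \in \mathscr{V}$ and apply the hypothesis twice, first to the pair $(x+h, x')$ and then to $(x, x'+h)$:
\[
 k(x+x'+h) = l(x+h) + n(x') = l(x) + n(x'+h).
\]
Hence $l(x+h) - l(x) = n(x'+h) - n(x')$, whose right-hand side is independent of $x$ and whose left-hand side is independent of $x'$. Denote the common value by $a(h)$; the same identity shows that $n$ has exactly the same increment function. Since $\mathscr{V}$ is an open cone, for every $h \in \mathbb{E}$ one may rescale a fixed element of $\mathscr{V}$ to find $x \in \mathscr{V}$ with $x+h \in \mathscr{V}$, so $a$ is defined on all of $\mathbb{E}$.

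Next I would check that $a$ is additive. Given $h, h' \in \mathbb{E}$, choose $x \in \mathscr{V}$ sufficiently deep in the cone so that $x, x+h, x+h+h' \in \mathscr{V}$; then
\[
 a(h) + a(h') = \bigl[l(x+h)-l(x)\bigr] + \bigl[l(x+h+h')-l(x+h)\bigr] = l(x+h+h') - l(x) = a(h+h').
\]
With additivity of $a$ established, fix base points $x_{0}, y_{0} \in \mathscr{V}$ and define
\[
 \widetilde{l}(x) = l(x_{0}) + a(x-x_{0}), \qquad \widetilde{n}(y) = n(y_{0}) + a(y-y_{0}), \qquad \widetilde{k}(z) = l(x_{0}) + n(y_{0}) + a(z - x_{0} - y_{0}).
\]
Additivity of $a$ yields $\widetilde{k}(x+y) = \widetilde{l}(x) + \widetilde{n}(y)$ on $\mathbb{E}\times\mathbb{E}$, and the identity $a(x-x_{0}) = l(x) - l(x_{0})$ for $x \in \mathscr{V}$ (immediate from the definition of $a$) shows that $\widetilde{l}, \widetilde{n}, \widetilde{k}$ agree with $l, n, k$ on $\mathscr{V}$.

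For uniqueness, suppose $(\widetilde{k}', \widetilde{l}', \widetilde{n}')$ is another extension. The Pexider equation on $\mathbb{E}$, evaluated at $x=0$ and $y=0$, forces $b(z) := \widetilde{k}'(z) - \widetilde{k}'(0)$ to be additive on $\mathbb{E}$. Its restriction to $\mathscr{V}$ equals $k$ minus a constant, and since $\mathbb{E} = \mathscr{V} - \mathscr{V}$ (the open cone $\mathscr{V}$ spans $\mathbb{E}$), additivity of $b$ recovers $b$ on $\mathbb{E}$ from its values on $\mathscr{V}$ via $b(u-v) = b(u) - b(v)$. Nonconstancy of $k$ pins down the additive constant uniquely, after which $\widetilde{l}'$ and $\widetilde{n}'$ are determined by the original Pexider equation evaluated on $\mathscr{V}$. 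The delicate point throughout is precisely this uniqueness step, because no regularity of $l, n, k$ is assumed and one therefore cannot invoke continuity; the geometric fact $\mathbb{E} = \mathscr{V} - \mathscr{V}$ together with abstract additivity is exactly what replaces it.
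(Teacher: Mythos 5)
Your argument is correct, but it is worth noting that the paper does not prove this statement at all: it is quoted as a special case of Theorem~3 of Chudziak and Tabor \cite{ChuTab08} on Pexider equations restricted to open connected domains, with $D=\mathscr{V}\times\mathscr{V}$. Your proof is therefore a genuinely different (and self-contained) route. It works by exploiting three features specific to an open convex cone that the general Chudziak--Tabor setting cannot assume: $\mathscr{V}+\mathscr{V}=\mathscr{V}$, $\mathscr{V}-\mathscr{V}=\mathbb{E}$, and the fact that for any $h\in\mathbb{E}$ a sufficiently deep rescaling $x=te$ satisfies $x,x+h\in\mathscr{V}$. The key step --- showing that the increment $l(x+h)-l(x)=n(x'+h)-n(x')$ is a well-defined additive function $a(h)$ on all of $\mathbb{E}$, then translating by base points --- is sound, and the verification that the extensions restrict correctly on $\mathscr{V}$ goes through (for $\widetilde{k}$ one uses $z=\tfrac{z}{2}+\tfrac{z}{2}$). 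One small inaccuracy: in the uniqueness step the nonconstancy of $k$ is not what pins down the constant. Any additive function that is constant on the open subcone $\mathscr{V}$ must vanish there (from $b(u+v)=b(u)+b(v)$ with $u,v,u+v\in\mathscr{V}$ one gets $c=2c$), so the constant $\widetilde{k}'(0)=k(u)+k(v)-k(u+v)$ and hence the whole extension is forced whether or not $k$ is constant; the nonconstancy hypothesis is an artifact of the more general domain treated in \cite{ChuTab08}. This misattribution does not damage the proof.
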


Especially, if a 
function $a\colon \mathscr{V}\to \mathbb{R}$ is additive on $\mathscr{V}$, then it can  
always be \emph{uniquely} extended to an additive function $\widetilde{a}\colon\mathbb{E}\to \mathbb{R}$. 
We remark this follows also from 
Theorem 4 of P\'{a}les \cite{Pal02}. 

In what follows we will present the general solution of the Pexider equation on a restricted domain. This 
theorem follows immediately from \cite[Theorem 1]{ChuTab08}, with exactly the same choice as above. 

\begin{thm}
 Assume that for the functions $k, l, n\colon \mathscr{V}\to \mathbb{R}$  
\[
 k(x+y)=l(x)+n(y)
\]
is fulfilled for any $x, y\in \mathscr{V}$ and that 
the function $k$ is nonconstant. 
Then and only then there exists a uniquely determined 
additive function $a\colon \mathbb{E}\to \mathbb{R}$ and real constants 
$b$ and $c$ so that 
\[
 \begin{array}{rcl}
  k(x)&=&a(x)+b+c\\
l(x)&=&a(x)+b\\
n(x)&=&a(x)+c
 \end{array}
\qquad 
\left(x\in \mathscr{V}\right). 
\]
\end{thm}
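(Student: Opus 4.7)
The ``if'' direction is immediate: direct substitution of the proposed forms into $k(x+y)=l(x)+n(y)$ reduces it to $a(x+y)=a(x)+a(y)$, which holds on $\mathscr{V}\times\mathscr{V}$ by additivity of $a$; moreover $k$ is nonconstant precisely when $a$ is nonconstant on $\mathscr{V}$.

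For the ``only if'' direction, the strategy is to lift the equation from $\mathscr{V}$ to the full algebra $\mathbb{E}$ and then read off the Pexider representation there. Concretely, I would first apply the preceding extension theorem (the one attributed to Chudziak--Tabor) to obtain uniquely determined $\widetilde{k},\widetilde{l},\widetilde{n}\colon\mathbb{E}\to\mathbb{R}$ satisfying $\widetilde{k}(x+y)=\widetilde{l}(x)+\widetilde{n}(y)$ for every $x,y\in\mathbb{E}$. Since the point $0\in\mathbb{E}$ now lies in the common domain, set $b:=\widetilde{l}(0)$ and $c:=\widetilde{n}(0)$; substituting $y=0$ and then $x=0$ into the extended equation gives
\[
\widetilde{l}(x)=\widetilde{k}(x)-c,\qquad \widetilde{n}(y)=\widetilde{k}(y)-b.
\]
Define $a(x):=\widetilde{k}(x)-b-c$. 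Reinserting the two displayed identities into the extended Pexider equation and simplifying yields $a(x+y)=a(x)+a(y)$ for all $x,y\in\mathbb{E}$, so $a$ is additive on the whole algebra. Restricting $a$ back to $\mathscr{V}$ produces the claimed forms of $k$, $l$, $n$.

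For uniqueness, any alternative triple $(a',b',c')$ representing $k,l,n$ on $\mathscr{V}$ in the stated way gives, upon extending $a'$ to $\mathbb{E}$ by the uniqueness of the additive extension noted just above the last theorem, another Pexider decomposition of $\widetilde{k},\widetilde{l},\widetilde{n}$ on $\mathbb{E}$; evaluating at $0$ pins down $b'=b$ and $c'=c$, and hence $a'=a$. The heart of the argument is the extension step, which is already provided by the quoted theorem; once $\widetilde{k},\widetilde{l},\widetilde{n}$ are available on $\mathbb{E}$, the remainder is a direct verification, so I do not expect any substantive obstacle to arise.
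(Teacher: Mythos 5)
Your proposal is correct. The paper itself offers no written proof of this statement: it simply asserts that the result ``follows immediately from [ChuTab08, Theorem 1]'' with $D=\mathscr{V}\times\mathscr{V}$, i.e.\ it quotes the solved Pexider equation on a restricted domain as a black box. What you do instead is derive the statement from the \emph{preceding} quoted theorem (the extension result, [ChuTab08, Theorem 3]): extend $k,l,n$ to $\widetilde{k},\widetilde{l},\widetilde{n}$ on all of $\mathbb{E}$, evaluate at $0$ to extract $b=\widetilde{l}(0)$ and $c=\widetilde{n}(0)$, and verify that $a:=\widetilde{k}-b-c$ is additive; uniqueness then falls out of the uniqueness clause of the extension theorem together with $\widetilde{a}'(0)=0$. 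This is a legitimate and complete deduction, and it is the standard way the restricted-domain Pexider solution is obtained from the extension theorem, so in substance it coincides with what the cited reference does. The one point worth being explicit about, which you already flag in passing, is the ``only then'' half of the equivalence: the representation forces $k$ nonconstant only when $a$ is nonzero, and a nonzero additive $a\colon\mathbb{E}\to\mathbb{R}$ is indeed nonconstant on the open cone $\mathscr{V}$ (if it were constant on an open set it would vanish identically by additivity), so no gap arises there.
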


Finally, the following statement concerns the constant solutions of the Pexider equation.

\begin{cor}
 Let $k\in \mathbb{R}$ be fixed and 
$l, n\colon \mathscr{V}\to \mathbb{R}$ be functions so that 
\[
 k= l(x)+n(y)
\]
is fulfilled for all $x, y\in \mathscr{V}$. 
Then there exists $c\in \mathbb{R}$ such that 
\[
 n(x)= c 
\quad 
\text{and} 
\quad l(x)=k-c 
\qquad
\left(x\in \mathscr{V}\right). 
\]

\end{cor}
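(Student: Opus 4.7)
The plan is to exploit the separation of variables in the equation $k = l(x) + n(y)$: since the left-hand side is a constant while $l$ depends only on $x$ and $n$ only on $y$, each function must be individually constant on $\mathscr{V}$. Concretely, I would first fix an arbitrary point $y_{0}\in\mathscr{V}$ and substitute it into the equation, obtaining
\[
l(x) = k - n(y_{0}) \qquad (x\in\mathscr{V}).
\]
Setting $c:=n(y_{0})$, this shows at once that $l$ is constant on $\mathscr{V}$ with value $k-c$.

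Next, I would feed this back into the hypothesis: for every $y\in\mathscr{V}$,
\[
k = l(x) + n(y) = (k-c) + n(y),
\]
which forces $n(y) = c$. This is consistent with the definition $c = n(y_{0})$ and delivers both required identities. No analytical input is needed—no continuity of $l,n$, no structural property of $\mathscr{V}$ beyond nonemptiness—so there is no genuine obstacle; the proof is essentially a one-line fixing-a-variable argument. (Alternatively, the statement could be extracted from the preceding Pexider theorem by viewing the constant function $k$ as a degenerate ``nonconstant'' case ruled out, but the direct route above is considerably shorter and avoids an unnecessary detour.)
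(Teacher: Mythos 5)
Your proof is correct, and it is the natural (indeed essentially the only) argument: the paper states this corollary without proof, and the direct fix-a-variable substitution you give is exactly what is implicitly intended. Your parenthetical observation is also apt --- the preceding Pexider theorem cannot be invoked here since it assumes $k$ is nonconstant, so the one-line direct route is the right one.
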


The following lemma will also be used frequently in the sequel. 

\begin{lemma}
 Let $a\colon \mathscr{V}\to\mathbb{R}$ be a additive function 
and assume that at least one of the following statements is valid. 
\begin{enumerate}[(i)]
 \item The function $a$ is continuous at a point $x_{0}\in\mathscr{V}$. 
\item There exists a set $A\subset \mathscr{V}$ of positive Lebesgue measure such that 
the function $a$ is bounded above or below on $A$;
\item There exists a set $A\subset \mathscr{V}$ of positive Lebesgue measure such that 
the restriction of $a$ to the set $A$ is measurable (in the sense of Lebesgue). 
\end{enumerate}
Then there exists $\lambda\in\mathbb{E}$ such that 
\[
 a(x)=\langle \lambda, x\rangle  
\qquad 
\left(x\in \mathscr{V}\right). 
\]
\end{lemma}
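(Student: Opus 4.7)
The plan is to extend $a$ to an additive function on all of $\mathbb{E}$, show that each of the three regularity hypotheses transfers to the extension, and then invoke the classical structure theorems of Cauchy, Fr\'{e}chet and Ostrowski on the finite-dimensional real vector space $\mathbb{E}$.

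First I would apply the remark following the Pexider extension theorem quoted above (alternatively Theorem~4 of P\'{a}les) to obtain a \emph{unique} additive function $\widetilde{a}\colon \mathbb{E}\to \mathbb{R}$ with $\widetilde{a}\vert_{\mathscr{V}}=a$. Since $\mathscr{V}$ is an open convex cone in $\mathbb{E}$, this extension agrees with $a$ on an open subset of $\mathbb{E}$, which is the crucial geometric fact that will allow the regularity to be transported from $\mathscr{V}$ to $\mathbb{E}$.

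Next I would verify, case by case, that the regularity hypothesis on $a$ yields the corresponding regularity for $\widetilde{a}$. In case (i), continuity of $a$ at $x_{0}\in \mathscr{V}$ gives continuity of $\widetilde{a}$ at $x_{0}$, because $\widetilde{a}$ coincides with $a$ on an open neighborhood of $x_{0}$ contained in $\mathscr{V}$. In case (ii) and case (iii), the set $A\subset \mathscr{V}$ has positive Lebesgue measure \emph{as a subset of $\mathbb{E}$} (the ambient and induced measures agree on the open set $\mathscr{V}$), so $\widetilde{a}$ is bounded on one side, respectively measurable, on a set of positive Lebesgue measure in $\mathbb{E}$.

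Finally, fixing a linear isomorphism $\mathbb{E}\cong \mathbb{R}^{n}$ (where $n=\dim \mathbb{E}$), the classical Cauchy--Fr\'{e}chet--Ostrowski dichotomy for additive functions on $\mathbb{R}^{n}$ says that any such function that is continuous at one point, or bounded from one side on a set of positive Lebesgue measure, or Lebesgue measurable on such a set, is automatically continuous everywhere and therefore $\mathbb{R}$-linear. Hence $\widetilde{a}$ is a continuous linear functional on the inner product space $\mathbb{E}$, and the Riesz representation theorem produces a unique $\lambda \in \mathbb{E}$ with $\widetilde{a}(x)=\langle \lambda, x\rangle$ for every $x\in \mathbb{E}$. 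Restricting to $\mathscr{V}$ proves the lemma. The only genuinely delicate point is the transfer of measurability in case (iii), and that point is settled by the openness of $\mathscr{V}$ in $\mathbb{E}$, so no real obstacle arises.
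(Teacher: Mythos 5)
Your proof is correct and follows essentially the same route as the paper: extend $a$ uniquely to an additive function on $\mathbb{E}$ via the Pexider/P\'{a}les extension theorem, transfer the regularity hypothesis using the openness of $\mathscr{V}$, and invoke the classical regularity theory of additive functions (the paper cites Sz\'{e}kelyhidi where you cite Cauchy--Fr\'{e}chet--Ostrowski, but the content is the same) to conclude continuity, linearity, and the inner-product representation. Your explicit justification of why conditions (i)--(iii) carry over to the extension is a small but welcome improvement on the paper's treatment, which asserts this transfer without comment.
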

\begin{proof}
 Due to the results of this section, the additive function 
$a\colon \mathscr{V}\to \mathbb{R}$ can the uniquely extended to an additive function 
defined on $\mathbb{E}$. We will denote this extension also by $a$. 
This means that the additive function $a\colon \mathbb{E}\to \mathbb{R}$ satisfies at least one of 
(i), (ii) or (iii). 
Due the the results of Sz\'{e}kelyhidi \cite{Sze85} this implies that 
$a\colon \mathbb{E}\to \mathbb{R}$ is continuous on $\mathbb{E}$. Therefore, there exists a unique
$\lambda \in \mathbb{E}$ so that 
\[
 a(x)=\langle \lambda, x\rangle 
\]
for all $x\in \mathbb{E}$. 
\end{proof}

A great number of basic functional equations can easily be reduced to \eqref{Eq1.3.1}. 
In the following, we list some of them.

\begin{defin}
Let $A\subset \mathbb{R}$ be a nonempty set and 
\[
\mathscr{M}=\left\{(x,y)\in\mathbb{R}^{2}\, \vert\,  x, y, xy\in A\right\}.
\]
A function $m\colon A\to\mathbb{R}$ is called
\emph{multiplicative on $A$}, if for all $(x, y)\in\mathscr{M}$
\begin{equation*}
m(xy)=m(x)m(y).
\end{equation*}
If $A=\mathbb{R}$ then the function $m$ is called simply \emph{multiplicative}.

Furthermore, we say that the function
$\ell\colon A\to\mathbb{R}$ is \emph{logarithmic on $A$} if for any
$(x,y)\in\mathscr{M}$,
\begin{equation*}
\ell(xy)=\ell(x)+\ell(y)
\end{equation*}
holds. 
\end{defin}

During the proof of our main result, we will use the following result of A.~J\'{a}rai, see 
also \cite{Jar99, Jar05}. 

\begin{thm}[J\'{a}rai]
 Let $Z$ be a regular topological space, 
$Z_{i} \, (i=1, \ldots, n)$ be topological spaces and $T$ be a first countable 
topological space. 
Let $Y$ be an open subset of $\mathbb{R}^{k}$, 
$X_{i}$ an open subset of $\mathbb{R}^{r_{i}}\, (i=1, \ldots, n)$ and 
$D$ an open subset of $T\times Y$. 
Let further $T'\subset T$ be a dense subset, 
$F\colon T'\to Z$, 
$g_{i}\colon D\to X_{i}$ and 
$h\colon D\times Z_{1}\times \cdots\times Z_{n}\to Z$. 
Suppose that the function $f_{i}$ is defined almost everywhere on $X_{i}$ 
(with respect to the $r_{i}$-dimensional Lebesgue measure) with values in $Z_{i}$ 
and the following conditions satisfied. 
\begin{enumerate}[(i)]
 \item for all $t\in T'$ and for almost all $y\in D_{t}=\left\{y\in Y\, (t, y)\in D\right\}$ 
\[
\tag{$\ast$} F(t)=h\left(t, y, f_{1}\left(g_{1}(t, y)\right), \ldots, f_{n}\left(g_{n}(t, y)\right)\right);
\]
\item for all fixed $y\in Y$, the function $h$ is continuous in the other variables;
\item $f_{i}$ is Lebesgue measurable on $X_{i}\, (i=1, \ldots, n)$;
\item $g_{i}$ and also the partial derivative $\dfrac{\partial g_{i}}{\partial y}$ are continuous on $D$ for all 
$i=1, \ldots, n$;
\item for each $t\in T$ there exists $y$ such that $(t, y)\in D$ and the partial derivative 
$\dfrac{\partial g_{i}}{\partial y}$ has rank $r_{i}$ at $(t, y)\in D$ for all $i=1, \ldots, n$. 
\end{enumerate}
Then there exists a unique continuous function $\widetilde{F}$ such that 
$F=\widetilde{F}$ almost everywhere on $T$ and if $F$ is replaced by $\widetilde{F}$ in 
$(\ast)$, then equation $(\ast)$ is satisfied everywhere. 
\end{thm}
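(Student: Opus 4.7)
The plan is to deduce continuity of $F$ from mere measurability of the $f_i$ by exploiting the rank condition (v) to ``solve'' for $y$ in $(\ast)$ as $t$ varies, and then transferring regularity from $h$ to $F$ via a Lusin/density argument. The overall scheme splits into three steps: (a) use the rank condition together with the implicit function theorem to obtain, near each $t_0\in T$, a locally submersive structure for the tuple $(g_1,\ldots,g_n)(t,\cdot)$; (b) invoke Lusin's theorem on the measurable $f_i$ and combine it with Fubini to select, for each $t$, a value $y(t)$ for which $(\ast)$ holds \emph{and} every $f_i(g_i(t,y(t)))$ lies in the Lusin-good region; (c) use continuity of $h$ from (ii) together with continuity of the $g_i$ from (iv) to deduce that the right-hand side of $(\ast)$, hence $F$, depends continuously on $t$ along $T'$, then extend by density to all of $T$.

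To carry out (a) in detail I would fix $t_0\in T$, choose $y_0$ satisfying (v), and use the rank-$r_i$ hypothesis on each $\partial g_i/\partial y$ to view $(g_1,\ldots,g_n)(t_0,\cdot)$ locally as a submersion onto an open neighborhood $U\subset X_1\times\cdots\times X_n$. Joint continuity of the $g_i$ and of their partial derivatives in (iv), together with first countability of $T$, lets me propagate this submersivity to all $t$ in a neighborhood $T_0$ of $t_0$, with a uniform lower bound on the size of the image (a quantitative inverse-function-theorem statement). In step (b), Lusin's theorem applied to each $f_i$ produces compact sets $K_i\subset X_i$ of arbitrarily large relative measure on which $f_i$ is continuous; the submersion bound from (a) combined with the a.e.\ validity of $(\ast)$ and Fubini guarantees, for every $t\in T_0\cap T'$, a set of admissible $y$'s of positive Lebesgue measure on which all $g_i(t,y)\in K_i$ and $(\ast)$ holds. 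From this I select (measurably in $t$, using a standard measurable-selection argument) a representative $y(t)$.

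For step (c), take any sequence $t_k\to t$ with $t_k\in T_0\cap T'$; density of $T'$ makes this possible for every $t\in T_0$. By compactness of the Lusin-good source region the $y(t_k)$ admit a limit point $y^\ast$; continuity of the $g_i$ together with continuity of each $f_i$ on $K_i$ yields $f_i(g_i(t_k,y(t_k)))\to f_i(g_i(t,y^\ast))$, and assumption (ii) then forces convergence of the full right-hand side of $(\ast)$. The limit value is independent of the choice of subsequence because any two accumulation points $y^\ast,y^{\ast\ast}$ feed into $(\ast)$ at the same $t$, giving the same $F$-value; this defines $\widetilde{F}(t)$, continuous by construction, equal to $F$ on $T'$, and satisfying $(\ast)$ everywhere after replacement. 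The main obstacle, I expect, is not step (a) or the Lusin step individually but the Fubini/measurable-selection maneuver closing step (b): one must produce the selection $t\mapsto y(t)$ so that the various null sets (the one where $(\ast)$ fails for a given $t$, and the ones where $g_i(t,\cdot)^{-1}(X_i\setminus K_i)$ lives) are avoided \emph{simultaneously in $t$} and in a way compatible with passing to limits $t_k\to t$. It is precisely the uniform-in-$t$ submersion estimate coming from (v) that rescues this step.
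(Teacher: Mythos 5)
The paper does not actually prove this statement: it is quoted as a known regularity theorem of J\'{a}rai and used as a black box (the sources are \cite{Jar99, Jar05}), so there is no internal proof to compare yours against. Judged on its own, your outline does follow the strategy that is genuinely used for theorems of this type --- Lusin's theorem on the $f_i$, the rank condition on $\partial g_i/\partial y$ to guarantee that the maps $y\mapsto g_i(t,y)$ pull null sets back to null sets (so the various exceptional sets can be avoided simultaneously), and continuity of $h$ and of the $g_i$ to pass to the limit. The architecture is right.

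But there are genuine gaps. The most serious is in step (c). For $t\notin T'$ the value $F(t)$ does not exist and $(\ast)$ is not known to hold at $t$, so your argument that two accumulation points $y^\ast, y^{\ast\ast}$ ``feed into $(\ast)$ at the same $t$, giving the same $F$-value'' is circular: it presupposes the very value $\widetilde{F}(t)$ you are trying to define. What is needed is that for any two sequences $t_k\to t$ and $s_k\to t$ in $T'$ the limits of $F(t_k)$ and $F(s_k)$ exist and agree; the standard device is to interleave the sequences and show the resulting value sequence has a unique cluster point, and it is exactly here that the hypothesis that $Z$ is regular --- which your sketch never uses --- enters, since in a general topological space a sequence with a unique cluster point need not converge. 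A second gap: the points $y(t_k)$ live in the slices $D_{t_k}$, which vary with $k$, and $Y$ is merely open, so to extract a convergent subsequence whose limit $y^\ast$ still satisfies $(t,y^\ast)\in D$ you must first confine all admissible $y$'s to a fixed compact $K\subset Y$ with $\{t\}\times K\subset D$ chosen around the point $y_0$ supplied by (v), and check that your Fubini/Lusin sets retain positive measure inside $K$; likewise Lusin's theorem must be applied on bounded neighbourhoods of the $g_i(t_0,y_0)$, since $X_i$ may have infinite measure. Finally, the measurable-selection language is a red herring --- no measurability of $t\mapsto y(t)$ is required, only existence of one admissible $y$ for each $t_k$ --- whereas the simultaneous avoidance of the null sets, which you correctly identify as the crux, is precisely the point where the quantitative submersion estimate has to be proved rather than asserted.
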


\section{The Olkin--Baker functional equation}

In what follows we will investigate the so-called \emph{Olkin--Baker equation} on the cone $\mathscr{V}$, 
that is, functional equation 
\begin{equation}\label{Eq2.1}
 a(x)+b(y)=c(x+y)+d\left(\mathbb{P}\left((x+y)^{-\frac{1}{2}}\right)x\right) 
\qquad 
\left(x, y\in \mathscr{V}\right), 
\end{equation}
where $a, b, c\colon \mathscr{V}\to\mathbb{R}$ and 
$d\colon \mathscr{D}\to \mathbb{R}$ are unknown functions and 
\[
 \mathscr{D}=\left\{z\in\mathscr{V}\, \vert \, \mathbf{1}-z\in\mathscr{V}\right\}. 
\]

Firstly, we will determine the general solutions. After that the a description of the 
regular solutions will follow. 

In 1975, during the Twelfth International Symposium on Functional E\-quations I.~Olkin (see Olkin \cite{Olk75}) posed the problem of 
solving the function equation 
\[
 f(x)g(y)=p(x+y)q\left(\frac{x}{y}\right) 
\qquad 
\left(x, y\in ]0, +\infty[\right), 
\]
where the unknown functions $f, g, p, q\colon ]0, +\infty[\to\mathbb{R}$ are assumed to be positive. 
The general solution of this equation was described in Baker \cite{Bak76}. 
After that, this functional equation was investigated by several authors. 
For example, in Lajk\'{o}--M\'{e}sz\'{a}ros \cite{LajMes11} is was assumed the the functional 
equation is satisfied for almost all pairs $(x, y)\in ]0, +\infty[^{2}$ and the unknown functions 
are measurable. Furthermore, in Ger--Misiewicz--Weso{\l}owski \cite{GerMisWes12} 
it is supposed only that the above equation is fulfilled for almost all pairs $(x, y)\in ]0, +\infty[^{2}$. 
However, no regularity assumption was imposed on the unknown positive functions 
$f, g, p, q$.

\subsection{The general solution}

In this subsection we begin with the description of the general solution of equation 
\eqref{Eq2.1}. Since we assumed that the cone $\mathscr{V}$ is not the Lorentz cone, equation 
\eqref{Eq2.1} has the form 
\begin{equation}\label{Eq2.2}
 a(x)+b(y)=c(x+y)+d\left((x+y)^{-\frac{1}{2}}x(x+y)^{-\frac{1}{2}}\right). 
\end{equation}

\begin{lemma}\label{lemma}
  Let us assume that the functions $a, b, c\colon \mathscr{V}\to\mathbb{R}$ and 
$d\colon \mathscr{D}\to \mathbb{R}$
fulfill equation \eqref{Eq2.2} for all $x, y\in \mathscr{V}$. 
Then there exists an additive function $A\colon \mathscr{V}\to\mathbb{R}$, logarithmic functions 
$\ell_{1}, \ell_{2}\colon ]0, +\infty[\to\mathbb{R}$ and 
$\kappa_{1}, \kappa_{2}\in\mathbb{R}$ such that 
\[
 \begin{array}{rcl}
  a(x)&=& A(x)+\ell_{1}\left(\det(x)\right)+e(x)\\
 b(x)&=& A(x)+\ell_{2}\left(\det(x)\right)+f(x)\\
 c(x)&=& A(x)+(\ell_{1}+\ell_{2})\left(\det(x)\right)+g(x)\\
d(u)&=& \ell_{1}\left(\det(u)\right)+\ell_{2}\left(\det(\mathbf{1}-u)\right)+h(u)
 \end{array}
\]
for all $x\in \mathscr{V}$
where the functions $e, f, g\colon \mathscr{V}\to\mathbb{R}$ and 
$h\colon \mathscr{D}\to \mathbb{R}$ satisfy the Olkin--Baker equation 
\[
  e(x)+f(y)=g(x+y)+h\left((x+y)^{-\frac{1}{2}}x(x+y)^{-\frac{1}{2}}\right).
\]

and $e, f$ and $g$ are homogeneous of zero order, that is 
\[
 e(sx)=e(x), \; f(sx)=f(x), \;  \text{and} \; g(sx)=g(x) 
\]
is fulfilled for arbitrary $s\in ]0, +\infty[$ and $x\in\mathscr{V}$. 
\end{lemma}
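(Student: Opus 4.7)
The plan is to exploit the scale-invariance of the argument $u=\mathbb{P}((x+y)^{-1/2})x$ of the unknown $d$ in order to turn \eqref{Eq2.2} into a Pexider-type equation, and then combine the Pexider theorem from Section~1.2 with a cocycle analysis to separate the \emph{additive}, \emph{logarithmic-in-$\det$}, and \emph{zero-homogeneous} components of $a, b, c, d$.

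Since $\mathbb{P}((sx+sy)^{-1/2})(sx)=\mathbb{P}((x+y)^{-1/2})x$ for every $s>0$, substituting $(sx, sy)$ for $(x,y)$ in \eqref{Eq2.2} and subtracting the original equation eliminates $d$ and yields
\[
 [a(sx)-a(x)]+[b(sy)-b(y)]=c(s(x+y))-c(x+y)\qquad (x,y\in\mathscr{V},\ s>0).
\]
For each fixed $s$ this is a Pexider equation on $\mathscr{V}$ in $(x,y)$, so the Pexider theorem (together with its corollary for the degenerate constant case) produces a uniquely determined additive $A_s\colon\mathbb{E}\to\mathbb{R}$ and constants $\alpha(s),\beta(s)\in\mathbb{R}$ with
\[
 a(sx)-a(x)=A_s(x)+\alpha(s),\quad b(sy)-b(y)=A_s(y)+\beta(s),\quad c(sz)-c(z)=A_s(z)+\alpha(s)+\beta(s).
\]

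Expanding $a(stx)-a(x)$ in two ways and separating the additive-in-$x$ and constant-in-$x$ pieces (a constant additive function must vanish) produces the cocycle identity $A_{st}(x)=A_s(x)+A_t(sx)$ together with $\alpha(st)=\alpha(s)+\alpha(t)$ and $\beta(st)=\beta(s)+\beta(t)$, so that $\alpha,\beta$ are logarithmic on $(0,+\infty)$. Since $\det(sx)=s^{r}\det(x)$, setting $\ell_1(u):=\alpha(u)/r$ and $\ell_2(u):=\beta(u)/r$ yields logarithmic functions absorbing these constants through $r\ell_i(s)=\alpha(s)$ or $\beta(s)$.

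The decisive and technically most demanding step is to realize the cocycle $(A_s)_{s>0}$ as a coboundary, that is, to construct an additive $A\colon\mathbb{E}\to\mathbb{R}$ with $A_s(x)=A(sx)-A(x)$ for all $s>0$ and $x\in\mathscr{V}$. Fixing $x_0\in\mathscr{V}$, the prescription $A(sx_0):=A_s(x_0)$ is forced, and the cocycle identity $A_{ss'}(x_0)=A_{s'}(x_0)+A_s(s'x_0)$ guarantees it is self-consistent under further scaling; the additivity of each $A_s$ in turn propagates the definition across sums of such rays. Extending by additivity on $\mathscr{V}$ (and then to $\mathbb{E}$ via the extension theorem of Section~1.2) produces the required $A$. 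Finally, with $e(x):=a(x)-A(x)-\ell_1(\det x)$ and analogously $f, g$, together with $h(u):=d(u)-\ell_1(\det u)-\ell_2(\det(\mathbf{1}-u))$, the construction of $A$ and $\ell_i$ forces $e,f,g$ to be zero-homogeneous, and the Jordan-algebra identities $\det x=\det u\cdot\det(x+y)$ and $\det y=\det(\mathbf{1}-u)\cdot\det(x+y)$ (where $u=\mathbb{P}((x+y)^{-1/2})x$) reduce \eqref{Eq2.2} to the stated Olkin--Baker equation for $e,f,g,h$.
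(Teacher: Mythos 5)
Your opening moves coincide with the paper's: the scale substitution $(x,y)\mapsto(sx,sy)$ kills $d$ because $\mathbb{P}\left((sx+sy)^{-1/2}\right)(sx)$ is scale invariant, the resulting Pexider equation gives $a(sx)-a(x)=A_{s}(x)+\alpha(s)$ (and similarly for $b,c$), and the identity $a_{st}=a_{s}+a_{t}(s\,\cdot)$ separates into the cocycle relation for $A_{s}$ and the logarithmic equations for $\alpha,\beta$; the normalization by the rank $r$ via $\det(sx)=s^{r}\det(x)$ is also the same. Up to that point the argument is sound.

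The gap is exactly at the step you yourself single out as decisive: producing one additive $A$ with $A_{s}(x)=A(sx)-A(x)$ for \emph{all} $s>0$ and \emph{all} $x\in\mathscr{V}$. Your construction fixes $x_{0}$, prescribes $A(sx_{0}):=A_{s}(x_{0})$ on a single ray, and then appeals to ``extending by additivity.'' This is not carried out, and as stated it is already in tension with additivity on the ray itself: any additive $A$ satisfies $A(2x_{0})=2A(x_{0})$, so the prescription forces $A_{2}(x_{0})=A(x_{0})$, $A_{3}(x_{0})=2A(x_{0})$, etc., and verifying that these constraints are mutually consistent requires more than the cocycle identity $A_{st}(x)=A_{s}(x)+A_{t}(sx)$ --- it requires the symmetry $A_{s}(x)+A_{t}(sx)=A_{t}(x)+A_{s}(tx)$ coming from $a_{st}=a_{ts}$, which you derive but never invoke. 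More seriously, nothing in your sketch explains why an additive extension built from one ray would satisfy $A(sx)-A(x)=A_{s}(x)$ for $x$ off that ray, which is what the zero-homogeneity of $e$, $f$, $g$ at every point demands. The paper closes this gap with a short explicit computation you are missing: putting $s=2$ in the symmetry relation and using $A_{t}(2z)=2A_{t}(z)$ gives $A_{t}(z)=A_{2}(tz)-A_{2}(z)$ outright, so the coboundary is realized by the single additive function $A=A_{2}$ with no transfinite or ray-by-ray construction needed. With that substitute, the rest of your argument (defining $e,f,g,h$ by subtracting $A$ and the $\frac{1}{r}\ell_{i}(\det(\cdot))$ terms, and checking via $\det x=\det u\cdot\det(x+y)$ that they satisfy the same Olkin--Baker equation) goes through.
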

\begin{proof}
 Let us assume that for the unknown functions $a, b, c$ and $d$ equation 
\eqref{Eq2.2} is valid. 
Let $s\in ]0, +\infty[$ be arbitrary and substitute $sx$ and $sy$ in place 
of $x$ and $y$, respectively to get 
\[
  a(sx)+b(sy)=c(s(x+y))+d\left((x+y)^{-\frac{1}{2}}x(x+y)^{-\frac{1}{2}}\right) 
\qquad 
\left(x, y\in \mathscr{V}\right). 
\]
This yields that 
\[
 a_{s}(x)+b_{s}(y)=c_{s}(x+y) 
\qquad 
\left(s\in ]0, +\infty[, x, y\in\mathscr{V}\right), 
\]
where the functions $a_{s}, b_{s}, c_{s}\colon\mathscr{V}\to\mathbb{R}$ are 
defined by 
\[
 \begin{array}{rcl}
  a_{s}(x)&=&a(sx)-a(x)\\[1.5mm]
b_{s}(x)&=&b(sx)-b(x)\\[1.5mm]
c_{s}(x)&=&c(sx)-c(x)
 \end{array}
\qquad 
\left(s\in ]0, +\infty[, x\in \mathscr{V}\right). 
\]
Thus there exists an additive function $A_{s}\colon \mathscr{V}\to\mathbb{R}$ and 
functions  $\alpha, \beta \colon \allowbreak ]0, +\infty[\to\mathbb{R}$ such that 
\[
 \begin{array}{rcl}
  a_{s}(x)&=&A_{s}(x)+\alpha(s)\\[1.5mm]
b_{s}(x)&=&A_{s}(x)+\beta(s)\\[1.5mm]
c_{s}(x)&=&A_{s}(x)+\alpha(s)+\beta(s)
 \end{array}
\qquad 
\left(s\in ]0, +\infty[, x\in\mathscr{V}\right). 
\]
Let $s, t\in ]0, +\infty[$ and $z\in \mathscr{V}$, in view of the definition of 
the function $a_{s}$, 
\[
 a_{st}(z)=a((st)z)-a(z)
=a((st)z)-a(sz)+a(sz)-a(z)=
a_{t}(sz)+a_{s}(z). 
\]
Therefore, 
\[
 A_{st}(z)+\alpha(st)=
A_{t}(sz)+\alpha(t)+A_{s}(z)+\alpha(s)
\quad 
\left(s, t\in ]0, +\infty[, z\in \mathscr{V}\right)
\]
holds, that is 
\[
 A_{st}(z)-A_{t}(sz)-A_{s}(z)=
\alpha(t)+\alpha(s)-\alpha(st)
\quad 
\left(s, t\in ]0, +\infty[, z\in \mathscr{V}\right). 
\]
Since the right hand side of this identity does not depend on 
$z\in\mathscr{V}$, however, on the left hand side $z\in\mathscr{V}$ can 
be arbitrary, we obtain that 
\[
 \alpha(st)=\alpha(s)+\alpha(t)
\]
holds for all $s, t\in ]0, +\infty[$. This means that the 
function $\alpha\colon ]0, +\infty[$ is logarithmic. 
A similar computation shows that the function 
$\beta \colon ]0, +\infty[\colon \mathbb{R}$ is also logarithmic on 
$]0, +\infty[$. 

Thus there exist logarithmic functions 
$\ell_{1}, \ell_{2}\colon ]0, + \infty[\to\mathbb{R}$ 
such that 
\[
 \alpha(s)=\ell_{1}(s) 
\quad 
\text{and}
\quad 
\beta(s)=\ell_{2}(s) 
\qquad
\left(s\in ]0, +\infty[\right)
\]
is fulfilled. 
Let again $s, t\in ]0, +\infty[$ and $z\in\mathscr{V}$ be arbitrary. Then 
\[
 a_{st}(z)=a_{t}(sz)+a_{s}(z)
\]
holds. Since the left hand side is symmetric in $s$ and $t$, we also have 
\[
 a_{st}(z)=a_{s}(tz)+a_{t}(z). 
\]
Hence, 
\[
 a_{t}(sz)+a_{s}(z)=a_{s}(tz)+a_{t}(z) 
\qquad 
\left(s, t\in ]0, +\infty[, z\in \mathscr{V}\right). 
\]
This yields that 
\[
 A_{t}(sz)+\alpha(t)+A_{s}(z)+\alpha(s)=
A_{s}(tz)+\alpha(s)+A_{t}(z)+\alpha(t)
\qquad 
\left(s, t\in ]0, +\infty[, z\in \mathscr{V}\right). 
\]
With the substitution $s=2$ we get 
\[
 A_{t}(z)=A_{2}(tz)- A_{2}(z)
\qquad 
\left(t\in ]0, +\infty[\right). 
\]
Thus 
\[
 a_{s}(z)=A_{2}(sz)-A_{2}(z)+\ell_{1}(s)
\qquad 
\left(s\in ]0, +\infty[, z\in \mathscr{V}\right). 
\]
Define the function $e\colon \mathscr{V}\to\mathbb{R}$ by 
\[
 e(x)=a(x)-A_{2}(x)-\frac{1}{r}\ell_{1}\left(\det(x)\right) 
\qquad 
\left(x\in\mathscr{V}\right),  
\]
where $r$ denotes the rank of the cone $\mathscr{V}$. 
The above identities imply that the function 
$e$ is homogeneous of order zero. 
A similar computation shows that for the function $b_{s}$
\[
 b(s)=A_{2}(sz)-A_{2}(z)+\ell_{2}(s)
\qquad 
\left(s\in ]0, +\infty[, z\in \mathscr{V}\right)
\]
holds. 
Therefore let us define the functions $f, g, h\colon \mathscr{V}\to\mathbb{R}$ through 
\[
 f(x)=b(x)-A_{2}(x)-\frac{1}{r}\ell_{2}(\det(x))
\qquad 
\left(x\in\mathscr{V}\right), 
\]

\[
 g(x)=c(x)-A_{2}(x)-\frac{1}{r}(\ell_{1}+\ell_{2})(\det(x))
\qquad 
\left(x\in\mathscr{V}\right)
\]
and
\[
 h(u)=d(u)- \dfrac{1}{r}\ell_{1}\left(\det(u)\right)-\dfrac{1}{r}\ell_{2}\left(\det(\mathbf{1}-u)\right)
\qquad 
\left(u\in\mathscr{D}\right). 
\]

In this case this functions are $f$ and $g$ are homogeneous of order zero. Furthermore, 
$e, f, g$ and $h$ also satisfy equation \eqref{Eq2.2}.

\end{proof}

In what follows, we will investigate the functions $e, f, g$ and $h$ appearing 
in the previous lemma. 

\begin{lemma}\label{L2.2}
 Let $a, b, c\colon \mathscr{V}\to\mathbb{R}$ and 
$d\colon \mathscr{D}\to \mathbb{R}$ be functions and let us assume that 
equation \eqref{Eq2.2} holds for all $x, y\in \mathscr{V}$. 
If the functions $a, b$ and $c$ are homogeneous of order zero,  
then for the function $c\colon \mathscr{V}\to \mathbb{R}$ functional equation 
\[
 c(yxy)=c(x)+2c(y) 
\qquad 
\left(x, y\in \mathscr{V}\right)
\]
is fulfilled. 
\end{lemma}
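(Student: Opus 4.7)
The plan is to reduce the Olkin--Baker equation to a cleaner form by a change of variables, then use zero-homogeneity to eliminate $d$ and reach a functional equation for $c$ alone, from which the multiplicative structure is extracted.

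First I would reparametrize. Every pair $(x,y) \in \mathscr{V}^2$ is uniquely written as $x = \mathbb{P}(w^{1/2})u$, $y = \mathbb{P}(w^{1/2})(e-u)$ with $w = x+y$ and $u = \mathbb{P}((x+y)^{-1/2})x \in \mathscr{D}$, using that $\mathbb{P}(w^{-1/2})\mathbb{P}(w^{1/2}) = \mathrm{id}$ and $\mathbb{P}(w^{1/2})e = w$. Equation (2.2) then reads
\[
a(\mathbb{P}(w^{1/2})u) + b(\mathbb{P}(w^{1/2})(e-u)) = c(w) + d(u).
\]
Setting $w=e$ yields the explicit formula $d(u) = a(u) + b(e-u) - c(e)$. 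Setting $y=x$ in (2.2) gives $\mathbb{P}((2x)^{-1/2})x = e/2$, and by zero-homogeneity of $a,b,c$ one obtains the relation $a(x) + b(x) = c(x) + K$ with $K := d(e/2) = a(e) + b(e) - c(e)$.

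Next I would write (2.2) also for the swapped pair $(y,x)$ (whose $u$-value is $e-u$, since the two $u$'s sum to $\mathbb{P}((x+y)^{-1/2})(x+y)=e$), add the two equations, and substitute the explicit formulas for $d(u)+d(e-u)$ and $a+b = c+K$. All $a,b$-terms cancel, producing the central identity
\[
c(\mathbb{P}(w^{1/2})u) + c(\mathbb{P}(w^{1/2})(e-u)) = 2c(w) + c(u) + c(e-u) - 2c(e) \qquad (w\in\mathscr{V},\ u\in\mathscr{D}).
\]
Substituting $u = \mathbb{P}(y^{-1})x$ (valid when $x, y^2-x \in \mathscr{V}$) gives $\mathbb{P}(y)u = x$ and $\mathbb{P}(y)(e-u) = y^2 - x$, so the central identity becomes
\[
c(x) + c(y^2 - x) = 2c(y^2) + c(\mathbb{P}(y^{-1})x) + c(e - \mathbb{P}(y^{-1})x) - 2c(e).
\]

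Finally, to extract $c(\mathbb{P}(y)x) = c(x) + 2c(y)$, I would exploit zero-homogeneity of $c$ to replace $u$ by $tu$ in the central identity; this shows $c(\mathbb{P}(w^{1/2})(e-tu))$ is independent of $t$ in its admissible range, and by comparing the values along rays $\{e-tu : t>0\}$ that emanate from $e$, one obtains that the quantity $F(v,u) := c(\mathbb{P}(v)u) - c(u)$ is invariant under the shifts $u \mapsto au + be$ for $a>0,\ b\in\mathbb{R}$ (with $au+be \in \mathscr{V}$). Combined with zero-homogeneity and the operator identity $\mathbb{P}(y^{1/2})^2 = \mathbb{P}(y)$, this invariance forces $F(v,u)$ to depend only on $v$, giving $c(\mathbb{P}(v)u) - c(u) = c(v^2) - c(e)$; applying this with $(v,u) = (y^{1/2}, y)$ (so $\mathbb{P}(y^{1/2})y = y^2$) yields $c(y^2) = 2c(y) - c(e)$, which together with the setting $y=e$ forces $c(e)=0$, and the lemma follows.

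The main obstacle is the last step: the central identity only gives a "Jensen-type" constraint $F(v,u)+F(v,e-u) = 2c(v^2)-2c(e)$, which is symmetric under $u\leftrightarrow e-u$ and does not by itself force $F(v,u)$ to be independent of $u$. Upgrading to independence requires combining the identity, zero-homogeneity, and the Jordan-algebraic compatibility of $\mathbb{P}$ with square roots in just the right way, and is the only place where the Jordan-algebra structure (as opposed to mere algebraic manipulation) genuinely enters.
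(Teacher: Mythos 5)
Your derivation up to the central identity is sound and in fact coincides with the paper's: after normalizing, swapping $x$ and $y$, adding, and using $a+b=c+K$ together with the explicit formula for $d$, both you and the paper arrive at
\[
c\left(\mathbb{P}(w^{\frac12})u\right)+c\left(\mathbb{P}(w^{\frac12})(\mathbf{1}-u)\right)=2c(w)+c(u)+c(\mathbf{1}-u)-2c(\mathbf{1}),
\]
which is exactly the paper's relation $C(x,y)=D(x,y)$ between the Cauchy difference $C(x,y)=c(x)+c(y)-c(x+y)$ and the function $D$. The problem is the last step, and you essentially concede it yourself. The invariance you extract from replacing $u$ by $tu$ (namely that $F(v,u):=c(\mathbb{P}(v)u)-c(u)$ is unchanged under $u\mapsto su$ and $u\mapsto \mathbf{1}-tu$) only lets you move $u$ inside the two-dimensional slice $\mathrm{span}\{u,\mathbf{1}\}\cap\mathscr{V}$: every such move stays in the linear span of $u$ and $\mathbf{1}$. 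For a cone of rank $r>2$ this slice is a proper subset of $\mathscr{V}$, so constancy of $F(v,\cdot)$ along these orbits does not force $F(v,u)$ to depend only on $v$, and the appeal to ``the Jordan-algebraic compatibility of $\mathbb{P}$ with square roots in just the right way'' is a placeholder, not an argument. As written, the proof does not close.

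The idea you are missing is the one the paper uses to break the $u\leftrightarrow(\mathbf{1}-u)$ symmetry: $C$ is a Cauchy difference, hence satisfies the cocycle equation $C(x+y,z)+C(x,y)=C(x,y+z)+C(y,z)$, and since $C=D$ the function $D$ satisfies it as well. This is a genuinely new constraint, not a consequence of the symmetric ``Jensen-type'' identity alone. Specializing the cocycle equation for $D$ to $y=z$ and expanding the definition of $D$, the paper cancels terms against $C(x,z)=D(x,z)$ and lands directly on
\[
c(x)-c(x+2z)=c\left((x+2z)^{-\frac12}x\,(x+2z)^{-\frac12}\right),
\]
i.e.\ $c(x)=c\bigl(y^{-\frac12}xy^{-\frac12}\bigr)+c(y)$ for all $x,y\in\mathscr{V}$, from which $c(yxy)=c(x)+2c(y)$ follows by putting $x=y$ and $x=y^{2}$ to get $c(\mathbf{1})=0$ and $c(y^{2})=2c(y)$. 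If you want to salvage your write-up, replace the ray/shift invariance argument by this cocycle step; the rest of your proposal can stay as is.
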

\begin{proof}
With out the loss of generality 
\[
 a(\mathbf{1})=b(\mathbf{1})=c(\mathbf{1})=d\left(\dfrac{1}{2}\right)=0
\]
can be assumed, otherwise the us consider the functions 
\[
 \hat{a}(x)=a(x)-a(\mathbf{1})
\quad 
\hat{b}(x)=b(x)-b(\mathbf{1})
\quad 
\hat{c}(x)=c(x)-c(\mathbf{1})
\quad 
\hat{d}(x)=d(x)-d\left(\frac{1}{2}\right)
\qquad 
\left(x\in \mathscr{V}\right)
\]

Let us assume that equation \eqref{Eq2.2} holds for all $x, y\in\mathscr{V}$. 
If we interchange $x$ and $y$ in \eqref{Eq2.2}, we obtain that 
\[
 a(y)+b(x)=c(x+y)+d\left((x+y)^{-\frac{1}{2}}y(x+y)^{-\frac{1}{2}}\right) 
\qquad 
\left(x, y\in\mathscr{V}\right). 
\]
However, equation \eqref{Eq2.2} with the substitution $y=x$ 
yields that 
\[
 a(x)+b(x)=c(2x)+d\left(\frac{1}{2}\right) 
\qquad 
\left(x\in\mathscr{V}\right). 
\]
Therefore, 
\[
 2c(x+y)+d\left((x+y)^{-\frac{1}{2}}x(x+y)^{-\frac{1}{2}}\right)+
d\left((x+y)^{-\frac{1}{2}}y(x+y)^{-\frac{1}{2}}\right)
=c(2x)+c(2y)+2d\left(\frac{1}{2}\right), 
\]
or after rearranging, 
\[
 -2c(x+y)+c(x)+c(y)=
d\left((x+y)^{-\frac{1}{2}}x(x+y)^{-\frac{1}{2}}\right)+
d\left((x+y)^{-\frac{1}{2}}y(x+y)^{-\frac{1}{2}}\right)
-2d\left(\frac{1}{2}\right)
\]
holds for all $x, y\in\mathscr{V}$, where the zero order homogeneity 
of the functions $c$ and $d$ was also used. 
Let $u, v\in\mathscr{V}$ be arbitrary and let us substitute 
\[
 x=(u+v)^{-\frac{1}{2}}u(u+v)^{-\frac{1}{2}} 
\quad 
y=(u+v)^{-\frac{1}{2}}v(u+v)^{-\frac{1}{2}}
\]
into the previous identity to receive
\begin{multline*}
 -2c(1)+c\left((u+v)^{-\frac{1}{2}}u(u+v)^{-\frac{1}{2}}\right)+
c\left((u+v)^{-\frac{1}{2}}v(u+v)^{-\frac{1}{2}} \right)
\\=
d\left((u+v)^{-\frac{1}{2}}u(u+v)^{-\frac{1}{2}}\right)+
d\left((u+v)^{-\frac{1}{2}}v(u+v)^{-\frac{1}{2}} \right)+2d\left(\frac{1}{2}\right). 
\end{multline*}
On the other hand, for all $u, v\in\mathscr{V}$
\[
 -2c(u+v)+c(u)+c(v)=
d\left((u+v)^{-\frac{1}{2}}u(u+v)^{-\frac{1}{2}}\right)+
d\left((u+v)^{-\frac{1}{2}}v(u+v)^{-\frac{1}{2}}\right)
-2d\left(\frac{1}{2}\right). 
\]
Thus, 
\begin{multline*}
 -2c(1)+c\left((u+v)^{-\frac{1}{2}}u(u+v)^{-\frac{1}{2}}\right)+
c\left((u+v)^{-\frac{1}{2}}v(u+v)^{-\frac{1}{2}} \right)
\\
=
-2c(u+v)+c(u)+c(v)
\qquad 
\left(u, v\in\mathscr{V}\right). 
\end{multline*}
Let us define the functions $C$ and $D$ on $\mathscr{V}^{2}$ through 
\[
 C(x, y)=-c(x+y)+c(x)+c(y) 
\qquad 
\left(x, y\in\mathscr{V}\right)
\]
and 
\begin{multline*}
 D(x, y)=
-2c(1)+c(x+y)
\\
+c\left((x+y)^{-\frac{1}{2}}x(x+y)^{-\frac{1}{2}}\right)
+c\left((x+y)^{-\frac{1}{2}}y(x+y)^{-\frac{1}{2}}\right)
\quad 
\left(x, y\in \mathscr{V}\right). 
\end{multline*}
In this case the previous equation yields that 
\[
 C(x, y)=D(x, y)
\]
is fulfilled for arbitrary $x, y\in\mathscr{V}$. 
Since the function $C$ is a Cauchy difference, it satisfies the 
cocycle equation, i.e., 
\[
 C(x+y, z)+C(x, y)=C(x, y+z)+C(y, z) 
\qquad 
\left(x, y\in\mathscr{V}\right), 
\]
furthermore, the definition of the function $C$ 
immediately implies that $C$ is a symmetric function. 
Therefore, the function $D$ is also symmetric and 
\[
 D(x+y, z)+D(x, y)=D(x, y+z)+D(y, z) 
\]
holds for all $x, y, z\in \mathscr{V}$. This equation 
with $y=z$ yields that 
\[
 D(x+y, z)+D(x, z)=D(x, 2z)+D(z, z)
\]
or 
\[
 D(x, z)=D(x, 2z)+D(z, z)-D(x+z, z) 
\qquad 
\left(x, z\in\mathbb{V}\right). 
\]
Now using the definition of the function $D$, we get that 
\begin{multline*}
 c(x+z)
+c\left((x+z)^{-\frac{1}{2}}x(x+z)^{-\frac{1}{2}}\right)
+c\left((x+z)^{-\frac{1}{2}}x(x+z)^{-\frac{1}{2}}\right)
\\
=
c(x+2z)
+c\left((x+2z)^{-\frac{1}{2}}x(x+2z)^{-\frac{1}{2}}\right)
+c\left((x+2z)^{-\frac{1}{2}}2z(x+2z)^{-\frac{1}{2}}\right)
\\
+c(z)
-c(x+2z)-c\left((x+2z)^{-\frac{1}{2}}(x+z)(x+2z)^{-\frac{1}{2}}\right)
\\
-c\left((x+2z)^{-\frac{1}{2}}z(x+2z)^{-\frac{1}{2}}\right), 
\end{multline*}
or after some rearrangement, 
\begin{multline*}
 c(x+z)
+c\left((x+z)^{-\frac{1}{2}}x(x+z)^{-\frac{1}{2}}\right)
+c\left((x+z)^{-\frac{1}{2}}x(x+z)^{-\frac{1}{2}}\right)
\\
=
c\left((x+2z)^{-\frac{1}{2}}x(x+2z)^{-\frac{1}{2}}\right)
+c(z)
\\
-c\left((x+2z)^{-\frac{1}{2}}(x+z)(x+2z)^{-\frac{1}{2}}\right)
\\
\left(x, z\in\mathscr{V}\right). 
\end{multline*}
On the other hand, $D(x, z)=C(x, z)$, that is, 
\[
 -c(x+z)+c(x)+c(z)
=
c(x+z)
+c\left((x+z)^{-\frac{1}{2}}x(x+z)^{-\frac{1}{2}}\right)
+c\left((x+z)^{-\frac{1}{2}}x(x+z)^{-\frac{1}{2}}\right). 
\]
Thus, for all $x, z\in\mathscr{V}$ we have 
\begin{multline*}
 -c(x+z)+c(x)+c(z)
\\
=
c\left((x+2z)^{-\frac{1}{2}}x(x+2z)^{-\frac{1}{2}}\right)
+c(z)
-c\left((x+2z)^{-\frac{1}{2}}(x+z)(x+2z)^{-\frac{1}{2}}\right), 
\end{multline*}
that is, 
\begin{multline*}
 -c(x+z)+c(x)
\\
=
c\left((x+2z)^{-\frac{1}{2}}x(x+2z)^{-\frac{1}{2}}\right)
-c\left((x+2z)^{-\frac{1}{2}}(x+z)(x+2z)^{-\frac{1}{2}}\right)
\\
\left(x, z\in\mathscr{V}\right). 
\end{multline*}

Furthermore, for all $x, z\in\mathscr{V}$
\begin{multline*}
 -2c(x+2z)+c(x)+c(x+z)
\\
=
c\left((x+2z)^{-\frac{1}{2}}x(x+2z)^{-\frac{1}{2}}\right)
+c\left((x+2z)^{-\frac{1}{2}}(x+z)(x+2z)^{-\frac{1}{2}}\right)
\end{multline*}
Thus, 
\[
 c(x)-c(x+2z)=c\left((x+2z)^{-\frac{1}{2}}x(x+2z)^{-\frac{1}{2}}\right)
\quad 
\left(x, z\in\mathscr{V}\right)
\]
holds. 
Let us observe that this latter identity yields that 
\begin{equation}\label{Eq2.3}
 c(x)=c\left(y^{-\frac{1}{2}}xy^{-\frac{1}{2}}\right)+c(y)
\qquad 
\left(x, y\in\mathscr{V}\right). 
\end{equation}

\end{proof}

\begin{rem}
 The case $\mathscr{V}=]0, +\infty[$ is trivial, since if a function $f\colon ]0, +\infty[\allowbreak\to\mathbb{R}$ is 
homogeneous of order zero, i.e., 
\[
 f(sx)=f(x) 
\qquad 
\left(s, x\in ]0, +\infty[\right), 
\]
then with the substitution $s=\dfrac{1}{x}$ we obtain that 
\[
 f(x)=f(1) 
\qquad 
\left(x\in ]0, +\infty[\right), 
\]
which means that $f$ is a constant function. Therefore, in case $\mathscr{V}=]0, +\infty[$, the general solution of 
equation \eqref{Eq2.2} is 
\[
 \begin{array}{rcl}
  a(x)&=& A(x)+\ell_{1}\left(\det(x)\right)-\kappa\\
 b(x)&=& A(x)+\ell_{2}\left(\det(x)\right)+\kappa\\
 c(x)&=& A(x)+\left(\ell_{1}+\ell_{2}\right)\left(\det(x)\right)\\
d(u)&=& \ell_{1}\left(\det(u)\right)+\ell_{2}\left(\det(\mathbf{1}-u)\right)
 \end{array}
\quad 
\left(x\in \mathscr{V}, u\in \mathscr{D}\right), 
\]
where 
$A\colon \mathscr{V}\to\mathbb{R}$ is an additive function,  
$\ell_{1}, \ell_{2}\colon ]0, +\infty[\to\mathbb{R}$ are logarithmic functions on 
$]0, +\infty[$ 
and $\kappa\in\mathbb{R}$ is a certain constant. 
\end{rem}

\subsection{Regular solutions}

Using the results of the previous subsection, we are able to determine the 
regular solutions of equation \eqref{Eq2.2}. This result is contained in the 
following statement. 

\begin{thm}\label{T2.2}
  Let $a, b, c\colon \mathscr{V}\to\mathbb{R}$ and 
$d\colon \mathscr{D}\to \mathbb{R}$ be functions and let us assume that 
\eqref{Eq2.2} holds for all $x, y\in \mathscr{V}$. If the functions 
$a, b, c$ and $d$ are continuous, then 
\[
 \begin{array}{rcl}
  a(x)&=& A(x)+C_{1}\ln\left(\det(x)\right)-\kappa\\
 b(x)&=& A(x)+C_{2}\ln\left(\det(x)\right)+\kappa\\
 c(x)&=& A(x)+(C_{1}+C_{2})\ln\left(\det(x)\right)\\
d(u)&=& C_{1}\ln\left(\det(u)\right)+C_{2}\ln\left(\det(\mathbf{1}-u)\right)
 \end{array}
\]
holds for all $x\in \mathscr{V}$
and $u\in\mathscr{D}$, where the function $A\colon \mathscr{V}\to\mathbb{R}$ is a continuous additive function. 
\end{thm}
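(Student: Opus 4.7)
We apply Lemma \ref{lemma} to obtain the general decomposition, then use continuity to rigidify each component.

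\emph{Step 1 (continuity of components).} By Lemma \ref{lemma}, there exist an additive $A$, logarithmic $\ell_{1}, \ell_{2}$, zero-order homogeneous $e, f, g\colon \mathscr{V}\to\mathbb{R}$ and $h\colon\mathscr{D}\to\mathbb{R}$ with
\[
a = A + \ell_{1}\circ\det + e, \quad b = A + \ell_{2}\circ\det + f, \quad c = A + (\ell_{1}+\ell_{2})\circ\det + g,
\]
and the analogous formula for $d$, the residuals $e, f, g, h$ again satisfying \eqref{Eq2.2}. Fix $x_{0}\in\mathscr{V}$ and restrict $a(sx_{0})$ to $s>0$: zero-order homogeneity makes $e(sx_{0})$ a constant in $s$, so this restriction equals $A(sx_{0}) + r\ell_{1}(s) + \mathrm{const}$, the sum of an additive-in-$s$ and a logarithmic-in-$s$ function. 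The combination $a(2sx_{0}) - 2a(sx_{0})$ depends on $s$ only through $-r\ell_{1}(s)$ up to a constant, so continuity of $a$ forces continuity of $\ell_{1}$ and then of $s\mapsto A(sx_{0})$. Letting $x_{0}$ vary yields continuity of $A$ on $\mathscr{V}$, which extends uniquely to a continuous additive function on $\mathbb{E}$ by the Pexider extension theorem quoted above. The same reasoning applied to $b, c, d$ gives continuity of $\ell_{2}$, $e$, $f$, $g$, $h$. Continuous logarithmic functions on $]0,+\infty[$ are of the form $s\mapsto C\ln s$, so $\ell_{i}(s) = C_{i}\ln s$.

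\emph{Step 2 ($g$ is constant).} After normalizing $e(\mathbf{1})=f(\mathbf{1})=g(\mathbf{1})=h(\mathbf{1}/2)=0$ as in the proof of Lemma \ref{L2.2}, Lemma \ref{L2.2} supplies the cocycle $g(yxy) = g(x) + 2g(y)$. For $y_{1}, y_{2}\in\mathscr{V}$ lying in a common Jordan frame, $y_{1}y_{2}y_{1} = y_{1}^{2}y_{2}$, and this together with the cocycle at $x=\mathbf{1}$ (which gives $g(y_{1}^{2}) = 2g(y_{1})$) yields $g(y_{1}y_{2}) = g(y_{1}) + g(y_{2})$. For $y = \sum_{i=1}^{r}\lambda_{i}c_{i}$ in spectral form, this frame-additivity gives $g(y) = \sum_{i}\gamma(\lambda_{i})$ for a single continuous $\gamma\colon \,]0,+\infty[\,\to\mathbb{R}$ with $\gamma(1)=0$, independence of $i$ coming from the permutation symmetry of the frame. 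The zero-order homogeneity $g(sy)=g(y)$ translates to $\sum_{i}\gamma(s\lambda_{i}) = \sum_{i}\gamma(\lambda_{i})$; specializing $\lambda_{2}=\cdots=\lambda_{r}=\lambda_{1}=1$ forces $r\gamma(s)=r\gamma(1)=0$, whence $\gamma\equiv 0$ and $g\equiv 0$.

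\emph{Step 3 ($e,f,h$ are constants).} With $g\equiv 0$, \eqref{Eq2.2} reduces to
\[
e(x)+f(y)=h\bigl((x+y)^{-1/2}x(x+y)^{-1/2}\bigr);
\]
setting $y=x$ gives $f=-e$, and swapping $x\leftrightarrow y$ yields $h(u)+h(\mathbf{1}-u)=0$. The reduced equation rewrites as $e(x)-e(y)=h(u(x,y))$, which implies the triple cocycle $h(u(x,y))+h(u(y,z))=h(u(x,z))$. Restricting to elements $x,y,z$ sharing a common Jordan frame and introducing the coordinates $\tau_{i}=\nu_{i}/(1-\nu_{i})$ for $u=\sum\nu_{i}c_{i}\in\mathscr{D}$, the cocycle reduces componentwise to the classical logarithmic equation, whose continuous solutions are $C\ln\tau$; the zero-order homogeneity of $e$ then pins $C=0$, giving $h\equiv 0$ on diagonal elements of each frame and hence on all of $\mathscr{D}$ by the spectral theorem. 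Consequently $e$ and $f$ are constants $-\kappa$ and $\kappa$. Reassembling with $\ell_{i}(\det x) = C_{i}\ln\det x$ yields the displayed formulas.

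\emph{Main obstacle.} The most delicate step is Step 2 (and its extension to $h$ in Step 3): showing that a continuous, zero-order homogeneous $g$ satisfying the cocycle $g(yxy)=g(x)+2g(y)$ must vanish. The cocycle alone admits the one-parameter family $C\ln\det$, which is eliminated only by zero-order homogeneity. The Jordan-algebraic spectral decomposition provides the cleanest route, reducing matters to a one-variable functional equation that is easily handled under continuity.
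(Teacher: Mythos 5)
Your overall architecture --- Lemma \ref{lemma} to split off the additive and $\ell_{i}\circ\det$ parts, then continuity plus zero-order homogeneity to annihilate the residuals $e,f,g,h$ --- matches the paper's, and Step 1 is essentially sound (though ``letting $x_{0}$ vary yields continuity of $A$'' needs one more observation: continuity of an additive function along every ray gives positive homogeneity, hence $\mathbb{R}$-linearity, hence continuity in finite dimension). Your treatment of the residuals via the Jordan-frame spectral decomposition is, however, genuinely different from the paper's route, which shows directly (by comparing the substitutions $y\mapsto y+z$ and $(x,y)\mapsto(x+y,z)$ and then putting $y=x$) that the residual of $b$ is constant, that the residuals of $a$ and $d$ coincide with that of $c$ up to constants, and is thereby left with the single equation \eqref{Eq2.3} for one function, to which a known characterization of continuous solutions applies.

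The genuine gap is in Steps 2 and 3, at the words ``independence of $i$ coming from the permutation symmetry of the frame''. What your frame-additivity actually yields is $g\left(\sum_{i}\lambda_{i}c_{i}\right)=\sum_{i}\gamma_{i}(\lambda_{i})$ with possibly \emph{different} continuous logarithmic functions $\gamma_{i}(\lambda)=C_{i}\ln\lambda$ attached to the different idempotents; zero-order homogeneity then gives only $\sum_{i}C_{i}=0$, not $C_{i}=0$ for each $i$. (The same issue recurs verbatim in Step 3 with the constants multiplying $\ln\tau_{i}$.) The function $g$ is not assumed invariant under automorphisms of the algebra, so ``permutation symmetry of the frame'' does not transfer to $g$ for free, and the restricted, frame-compatible instances of the cocycle that you use admit the non-symmetric solutions $\sum_{i}C_{i}\ln\lambda_{i}$ with $\sum_{i}C_{i}=0$. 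The claim can be rescued: from the full cocycle $g\left(\mathbb{P}(y)x\right)=g(x)+2g(y)$ one deduces $g(kx)=g(x)$ for every $k$ in the connected automorphism group, since such $k$ are products of quadratic representations fixing $\mathbf{1}$, and in a simple algebra permutations of a Jordan frame are realized by such $k$; this forces the $C_{i}$ to coincide. But none of this is in your proof, and it is precisely the step that lets the homogeneity argument close, so as written the conclusions $g\equiv 0$ and $h\equiv 0$ are not established.
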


\section{The main result}

In view of the results of the previous section, we are able to prove our main theorem. 

\begin{thm}
 Let $X$ and $Y$ be independent random variables valued in a symmetric cone $\mathscr{V}$ of rank $r> 2$, 
with strictly positive and Lebesgue measurable densities. 
Let further 
\[
 V=X+Y \qquad \text{and} \qquad 
U=(X+Y)^{-\frac{1}{2}}X(X+Y)^{-\frac{1}{2}}. 
\]
If $U$ and $V$ are independent then there exist $a\in\mathscr{V}$ and $p_{1}, p_{2}>\frac{\dim(\mathscr{V})}{r}-1$ such that 
\[
 X\sim \gamma_{p_{1}, a} \quad 
\text{and}
\quad 
Y\sim \gamma_{p_{2}, a}
\]
holds. 
\end{thm}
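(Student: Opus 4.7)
The plan is to reduce the independence of $U$ and $V$ to the Olkin--Baker equation \eqref{Eq2.1} and then read off the Wishart form from Theorem~\ref{T2.2}. Write $n=\dim(\mathscr{V})$. The transformation $(x,y)\mapsto(u,v)$ with $v=x+y$ and $u=\mathbb{P}(v^{-1/2})x$ has inverse $x=\mathbb{P}(v^{1/2})u$, $y=\mathbb{P}(v^{1/2})(\mathbf{1}-u)$, and a standard computation using $\det\mathbb{P}(v^{1/2})=(\det v)^{n/r}$ gives Jacobian $(\det v)^{n/r}$. Hence the joint density of $(U,V)$ is
\[
f_X\bigl(\mathbb{P}(v^{1/2})u\bigr)\,f_Y\bigl(\mathbb{P}(v^{1/2})(\mathbf{1}-u)\bigr)\,(\det v)^{n/r}.
\]

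Independence of $U$ and $V$ forces this expression to factor as $g(u)h(v)$ for some nonnegative measurable $g,h$. Taking logarithms (permitted by strict positivity) and reverting to the $(x,y)$ variables gives, for all $x,y\in\mathscr{V}$,
\[
\log f_X(x)+\log f_Y(y)=\widetilde c(x+y)+\widetilde d\bigl(\mathbb{P}((x+y)^{-1/2})x\bigr),
\]
with $\widetilde c(v)=\log h(v)-(n/r)\log\det v$ and $\widetilde d=\log g$. This is exactly equation \eqref{Eq2.1}, and the Lebesgue measurability assumption on the densities transfers to all four functions appearing there.

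The technical crux of the argument, and the step I expect to be the main obstacle, is to promote measurability to continuity in the Olkin--Baker equation: this is carried out by applying the version of J\'arai's theorem recalled in the preliminaries, which demands a careful choice of parametrization so that the rank condition on the partial derivatives of the $g_i$'s is satisfied at each point of the cone (the hypothesis $r>2$, which excludes the Lorentz cone, enters here). Once continuity is in hand, Theorem~\ref{T2.2} supplies constants $C_1,C_2,\kappa\in\mathbb{R}$ and a continuous additive function $A\colon\mathscr{V}\to\mathbb{R}$ such that
\[
\log f_X(x)=A(x)+C_1\log\det x-\kappa,\qquad \log f_Y(y)=A(y)+C_2\log\det y+\kappa.
\]

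To conclude, the preliminaries lemma on regular additive functions yields $A(x)=\langle\lambda,x\rangle$ for a single $\lambda\in\mathbb{E}$; integrability of $f_X$ and $f_Y$ on $\mathscr{V}$ forces $-\lambda\in\mathscr{V}$, and setting $a=-\lambda$, $p_i=C_i+n/r$ recovers
\[
f_X(x)\propto(\det x)^{p_1-n/r}\exp(-\langle a,x\rangle),\qquad f_Y(y)\propto(\det y)^{p_2-n/r}\exp(-\langle a,y\rangle),
\]
which are precisely the densities of $\gamma_{p_1,a}$ and $\gamma_{p_2,a}$. The common scale parameter $a$ is forced because the same additive $A$ appears in both log-densities, and the integrability requirement $p_i>n/r-1$ together with the normalizing constants (determined by $\kappa$ and the multivariate Gamma factor $\Gamma_{\mathscr{V}}(p_i)$) completes the identification.
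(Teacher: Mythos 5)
Your proposal is correct and follows essentially the same route as the paper: reduce independence of $(U,V)$ to the Olkin--Baker equation satisfied almost everywhere (the Jacobian factor $(\det v)^{n/r}$ you compute is exactly what the paper absorbs into the definition of $c$), upgrade measurable a.e.\ solutions to continuous everywhere solutions via J\'arai's theorem, and then read off the Wishart densities from Theorem~\ref{T2.2} together with the representation of continuous additive functions and the integrability constraints. The only cosmetic slip is that the factorized equation holds for almost all, not all, pairs $(x,y)$ before the J\'arai step, which you in effect acknowledge by invoking that theorem.
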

\begin{proof}
 Let $f_{X}, f_{Y}, f_{U}$ and $f_{V}$ denote the densities of $X, Y, U$ and $V$, respectively. 
Furthermore, let us consider the functions $a, b, c\colon \mathscr{V}\to \mathbb{R}$ and $d\colon\mathscr{D}\to \mathbb{R}$ defined by 
\[
\begin{array}{rcl}
 a(x)&=& \ln\left(f_{X}(x)\right),  
\\[3mm]
b(x)&=&\ln\left(f_{Y}(x)\right),  
\\[3mm]
c(x)&=&\ln\left(f_{V}(x)\right)-\dfrac{\dim(\mathscr{V})}{r}\ln\left(\det(x)\right),  
\\[3mm]
d(u)&=&\ln\left(f_{V}(u)\right)
\end{array}
\quad 
\left(x\in \mathscr{V}, u\in\mathscr{D}\right). 
\]
In this case 
\[
 a(x)+b(y)=c(x+y)+d\left(\mathbb{P}\left((x+y)^{-\frac{1}{2}}\right)x\right)
\]
holds for almost all $x, y\in \mathscr{V}$, where the functions $a, b, c$ and $d$ are measurable. 
Now, applying the theorem of J\'{a}rai successively, 
in a similar way as in Lemma 1 of M\'{e}sz\'{a}ros \cite{Mes10} we get the following. 
There exist uniquely determined, 
continuous functions $\widetilde{a}, \widetilde{b}, \widetilde{c}\colon \mathscr{V}\to \mathbb{R}$ and $\widetilde{d}\colon \mathscr{D}\to \mathbb{R}$ 
such that 
\[
 a=\widetilde{a}, \;
b=\widetilde{b}, \;
c=\widetilde{c}, \quad 
\text{and}
\quad 
d=\widetilde{d}
\]
holds almost everywhere and 
\[
a(x)+b(y)=c(x+y)+d\left(\mathbb{P}\left((x+y)^{-\frac{1}{2}}\right)x\right) 
\]
is valid for all $x, y\in \mathscr{V}$. 
In view of Theorem \ref{T2.2}, this means that 
\[
 \begin{array}{rcl}
  a(x)&=&\langle \lambda, x\rangle +\ell_{1}\left(\det(x)\right)-\kappa\\[2mm]
 b(x)&=& \langle \lambda, x\rangle+\ell_{2}\left(\det(x)\right)+\kappa
 \end{array}
\qquad 
\left(x\in \mathscr{V}\right). 
\]
Thus 
\[
 \begin{array}{rcl}
 f_{X}(x)&=&\exp(a(x))= \exp(C_{1})\exp(\langle \lambda, x\rangle)\det(x)^{k_{1}}\\[2mm]
 f_{Y}(x)&=&\exp(b(x))=\exp(C_{2})\exp(\langle \lambda, x\rangle)\det(x)^{k_{2}}
 \end{array}
\qquad 
\left(x\in \mathscr{V}\right). 
\]
Since, $f_{X}$ and $f_{Y}$ are densities, we obtain that $a=\lambda\in \mathscr{V}$, 
\[
 k_{1}=p_{1}-\dfrac{\dim(\mathscr{V})}{r}>-1 
\qquad 
k_{2}=p_{2}-\dfrac{\dim(\mathscr{V})}{r}>-1 
\]
and 
\[
 \exp(C_{1})=\dfrac{\det(a)^{p_{1}}}{\Gamma_{\nu}(p_{1})} 
\qquad 
\exp(C_{1})=\dfrac{\det(a)^{p_{1}}}{\Gamma_{\nu}(p_{1})}. 
\]
All in all, 
\[
 X\sim \gamma_{p_{1}, a} \quad 
\text{and}
\quad 
Y\sim \gamma_{p_{2}, a}
\]
can be concluded. 
\end{proof}

\subsection*{Acknowledgement}
I am indebted to Professors Lajos Moln\'{a}r and M\'{a}ty\'{a}s Barczy for 
drawing my attention to this problem. The author gratefully acknowledges the many helpful suggestions of the anonymous referee. 

\providecommand{\bysame}{\leavevmode\hbox to3em{\hrulefill}\thinspace}
\providecommand{\MR}{\relax\ifhmode\unskip\space\fi MR }
\providecommand{\MRhref}[2]{%
  \href{http://www.ams.org/mathscinet-getitem?mr=#1}{#2}
}
\providecommand{\href}[2]{#2}

\begin{flushright}
 MTA--DE 'Lend\"{u}let'\\
Functional Analysis Research Group\\
Institute of Mathematics\\
University of Debrecen\\
P. O. Box: 12.\\
Debrecen\\
H--4010\\
Hungary
\end{flushright}

\begin{thebibliography}{10}

\bibitem{Acz66}
J\'{a}nos Acz\'{e}l, \emph{{Lectures on functional equations and their
  applications.}}, {New York and London: Academic Press. XIX, 510 p. }, 1966.

\bibitem{Bak76}
John~A. Baker, \emph{{On the functional equation f(x)g(y) = p(s+y)q(x/y)}},
  Aequationes Math. \textbf{14} (1976), 493--506.

\bibitem{BobWes02}
Konstancja Bobecka and Jacek Weso{\l}owski, \emph{{The Lukacs--Olkin--Rubin
  theorem without invariance of the ``quotient''}}, Stud. Math. \textbf{152}
  (2002), no.~2, 147--160.

\bibitem{CasLet96}
M.~Casalis and G.~Letac, \emph{{The Lukacs-Olkin-Rubin characterization of
  Wishart distributions on symmetric cones.}}, Ann. Stat. \textbf{24} (1996),
  no.~2, 763--786.

\bibitem{ChuTab08}
Jacek Chudziak and J{\'o}zef Tabor, \emph{Generalized {P}exider equation on a
  restricted domain}, J. Math. Psych. \textbf{52} (2008), no.~6, 389--392.

\bibitem{Eat07}
Morris L.~Eaton, 
\emph{Multivariate statistics}
A vector space approach. Reprint of the 1983 original. 
Institute of Mathematical Statistics Lecture Notes---Monograph Series, \textbf{53.} 
Institute of Mathematical Statistics, Beachwood, 2007.

\bibitem{FarKor94}
Jacques Faraut and Adam Kor\'anyi, \emph{{Analysis on symmetric cones}},
  {Oxford: Clarendon Press. xii, 382 p.}, 1994.

\bibitem{GerMisWes12}
Roman Ger, Jolanta Misiewicz, and Jacek Weso{\l}owski, \emph{The {L}ukacs
  theorem and the {O}lkin-{B}aker equation}, Journal of Mathematical Analysis
  and Applications \textbf{399} (2013), no.~2, 599 -- 607.

\bibitem{Jar99}
Antal J{\'a}rai, \emph{Measurable solutions of functional equations satisfied
  almost everywhere}, Math. Pannon. \textbf{10} (1999), no.~1, 103--110.

\bibitem{Jar05}
\bysame, \emph{Regularity properties of functional equations in several
  variables}, Advances in Mathematics (Springer), vol.~8, Springer, New York,
  2005.

\bibitem{Kol10}
Bartosz Ko{\l}odziejek, \emph{The {W}ishart distribution on the {L}orentz
  cone}, Master's thesis, Fac. {M}ath. {I}nfor. {S}ci., {W}arsaw {U}niversity
  of {T}echnology, 2010.

\bibitem{Kol12}
\bysame, \emph{The {L}ukacs-{O}lkin-{R}ubin theorem on symmetric cones through
  {G}leason's theorem}, arXiv:1206.6091v1, 2012.

\bibitem{Kuc09}
Marek Kuczma, \emph{{An introduction to the theory of functional equations and
  inequalities. Cauchy's equation and Jensen's inequality. Edited by Attila
  Gil\'anyi. 2nd ed.}}, {Basel: Birkh\"auser. xiv, 595~p. }, 2009.

\bibitem{LajMes11}
K\'{a}roly Lajk\'{o} and Fruzsina M\'{e}sz\'{a}ros, \emph{{Functional
  {E}quations and {C}haracterization {P}roblems}}, VDM Verlag Dr. M\"{u}ller,
  Saarbrücken, 2011.

\bibitem{Luk55}
Eugene Luk\'{a}cs, \emph{{A characterization of the gamma distribution}}, Ann.
  Math. Stat. \textbf{26} (1955), 319--324.

\bibitem{Mes10}
Fruzsina M{\'e}sz{\'a}ros, \emph{A functional equation and its application to
  the characterization of gamma distributions}, Aequationes Math. \textbf{79}
  (2010), no.~1-2, 53--59.

\bibitem{Mol06}
Lajos Moln\'{a}r, \emph{{A remark on the Kochen-Specker theorem and some
  characterizations of the determinant on sets of Hermitian matrices.}}, Proc.
  Am. Math. Soc. \textbf{134} (2006), no.~10, 2839--2848.

\bibitem{Olk75}
I.~Olkin, \emph{Problem (p128)}, Aequationes Math. \textbf{12} (1975),
  290--292.

\bibitem{OlkRub62}
I.~Olkin and H.~Rubin, \emph{{A characterization of the Wishart
  distribution.}}, Ann. Math. Stat. \textbf{33} (1962), 1272--1280.

\bibitem{Pal02}
Zsolt P\'{a}les, \emph{Extension theorems for functional equations with
  bisymmetric operations}, Aequationes Math. \textbf{63} (2002), no.~3,
  266--291.

\bibitem{Sze85}
L\'{a}szl\'{o} Sz{\'e}kelyhidi, \emph{Regularity properties of polynomials on
  groups}, Acta Math. Hungar. \textbf{45} (1985), no.~1-2, 15--19. \MR{779512
  (86h:39018a)}

\end{thebibliography}
\end{document}